\documentclass[11pt]{amsart} 
 
\usepackage{amsmath, amsthm, amssymb}
\usepackage{graphics}
\usepackage{pinlabel}
\usepackage{color}
\usepackage[colorlinks,citecolor=blue,linkcolor=red]{hyperref}
\usepackage{graphicx}
\usepackage{nicefrac}						
\usepackage{mathtools}
\usepackage{float}
\usepackage{tikz-cd}

\usepackage{enumerate}

\usepackage{fullpage}

\usepackage{secdot}
\usepackage{caption}

\tikzcdset{every label/.append style = {font = \large}}

\DeclareMathOperator{\Hom}{H^{sc}_1}
 
\newtheorem{theorem}{Theorem}[section]
\newtheorem{lemma}[theorem]{Lemma}
\newtheorem{remark}[theorem]{Remark}

\newtheorem{corollary}[theorem]{Corollary}

\newtheorem{conjecture}[theorem]{Conjecture}
\newtheorem{proposition}[theorem]{Proposition}
\newtheorem{question}{Question}

\newtheorem*{mainthm*}{Main Theorem}

\everymath{\displaystyle}

\title{Characterizing covers via simple closed curves}
\author{Tarik Aougab, Max Lahn, Marissa Loving, Yang Xiao}
%\date{January 2019}

\begin{document}

\maketitle 

\begin{abstract} Given two finite covers $p: X \to S$ and $q: Y \to S$ of a connected, oriented, closed surface $S$ of genus at least $2$, we attempt to characterize the equivalence of $p$ and $q$ in terms of which curves lift to simple curves. Using Teichm\"uller theory and the complex of curves, we show that two regular covers $p$ and $q$ are equivalent if for any closed curve $\gamma \subset S$, $\gamma$ lifts to a simple closed curve on $X$ if and only if it does to $Y$. When the covers are abelian, we also give a characterization of equivalence in terms of which powers of simple closed curves lift to closed curves. \end{abstract}

\section{Introduction}

It is a corollary of a renowned theorem of Scott \cite{Scott} that every closed curve on a hyperbolic surface $S$ lifts to a simple closed curve on some finite cover of $S$. This result was made effective by Patel \cite{Patel}, and more work has been done since then to improve the bound on the degree of the required cover, as well as to study the connection between this degree and the self intersection number of the curve (see Gupta--Kapovich \cite{Gupta}, Gaster \cite{Gaster}, and Aougab--Gaster--Patel--Sapir \cite{GAPS}). In the spirit of this work, it is natural to ask the following question, which motivates the main result of this paper.

\begin{question} \label{simple-lifting}
What information about $p: X \to S$ and $q: Y \to S$ can be derived from understanding how curves on $S$ \emph{lift simply} to $X$ and $Y$?  
\end{question}

Our first result addresses this question in the setting of regular finite covers of a closed surface $S$ with genus at least $2$. In particular, we characterize when two regular finite covers are equivalent in terms of which closed curves have simple elevations, where an {\it elevation} of a closed curve $\alpha \subset S$ along a covering map $f: \Sigma \to S$ is a closed curve on $\Sigma$ that projects to $\alpha$ under $f$. 

\begin{theorem} \label{thm:regular} If $p:X \to S$ and $q: Y \to S$ are two regular covers so that given any closed curve $\gamma \subset S$, there exists simple elevations of $\gamma$ to $X$ if and only if there exists simple elevations of $\gamma$ to $Y$, then $p$ and $q$ are equivalent covers. \end{theorem}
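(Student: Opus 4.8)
The plan is to translate the statement into group theory and then extract, from the simple-lifting data alone, enough to pin down the covers. Fix a hyperbolic metric on $S$ and pull it back to $X$ and $Y$, so that ``lifts to a simple closed curve'' means ``has a simple closed geodesic elevation.'' Write $G = \pi_1(S)$ and let $H = p_*\pi_1(X)$ and $K = q_*\pi_1(Y)$, which are finite-index \emph{normal} subgroups of $G$ because $p$ and $q$ are regular; recall that $p$ and $q$ are equivalent if and only if $H = K$. Two preliminary remarks will be used throughout: first, in a regular cover the deck group permutes the elevations of a fixed curve transitively, so ``some elevation of $\gamma$ is simple'' is equivalent to ``every elevation of $\gamma$ is simple,'' and hence depends only on the free homotopy class $[\gamma]$; second, for each conjugacy class the order $o_H([\gamma])$ of its image in the deck group $G/H$ equals the degree of each elevation over $\gamma$, and $[\gamma]$ is contained in $H$ if and only if $o_H([\gamma]) = 1$. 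Thus it suffices to recover $H$ itself from the set $\mathcal S_H$ of classes with simple elevations.

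The key step is to probe the covers using powers of simple closed curves. Let $c$ be a simple closed curve on $S$. Since $c$ is embedded, $p^{-1}(c)$ is an embedded multicurve, and the elevation of $c$ through a chosen point is a \emph{simple} closed geodesic $\bar c$ of length $o_H(c)\,\ell(c)$ wrapping $o_H(c)$ times around $c$. The elevation of the power $c^k$ is supported on $\bar c$ and traverses it $k/\gcd(k,o_H(c))$ times; hence it is simple precisely when it wraps once, i.e. when $k \mid o_H(c)$. Consequently
\[
  o_H(c) \;=\; \max\{\,k\ge 1 : [c^k]\in\mathcal S_H\,\},
\]
so the simple-lifting data of the powers $c, c^2, c^3,\dots$ determines the deck-group order $o_H(c)$ of every simple closed curve $c$. (The same computation recovers $o_H([\gamma])$ for any class $[\gamma]$ that lifts simply, but the simple closed curves are what we need.)

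Applying this to both covers and using the hypothesis $\mathcal S_H = \mathcal S_K$ gives $o_H(c) = o_K(c)$ for every simple closed curve $c$. In particular $c \in H \iff o_H(c) = 1 \iff o_K(c) = 1 \iff c\in K$, so $H$ and $K$ contain exactly the same simple closed curves (indeed the same powers of simple closed curves). Since $H$ is normal, the set of simple closed curves it contains is conjugation-invariant and generates a normal subgroup $L \trianglelefteq G$ with $L \le H$; because the same curve-set arises from $K$, the subgroup $L$ built from $H$ coincides with the one built from $K$.

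It remains to show $L = H$ (and hence $L = K$), i.e. that a finite-index normal subgroup of $\pi_1(S)$ is generated by the simple closed curves it contains; granting this, $H = L = K$ and we are done. This is the main obstacle. Suppose $L \lneq H$; then $\mathbb H/L \to \mathbb H/H$ is a nontrivial regular cover, and we must produce a simple closed curve of $S$ lying in $H$ but not in $L$ --- equivalently, a degree-one elevation to $X$ of a simple closed curve of $S$ that fails to lift further to $\mathbb H/L$. I expect this to require the combinatorics of the curve complex $\mathcal C(S)$: using change of coordinates to move simple closed curves into the subsurfaces or homology classes detected by the cover $\mathbb H/L \to X$, together with the connectivity of $\mathcal C(S)$, to guarantee such a distinguishing curve exists. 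This generation statement --- that the embedded curves in $H$ cannot all be trapped in a proper normal subgroup --- is where the real content of the theorem lies.
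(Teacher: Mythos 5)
Your opening reduction is correct and cleanly executed: for regular covers the elevations of a simple closed curve $c$ are embedded, the elevation of $c^k$ wraps $k/\gcd(k,o_H(c))$ times around the elevation of $c$, and hence the simple-lifting data of the powers of $c$ recovers $o_H(c) = n_p(c)$. This shows the hypothesis of Theorem \ref{thm:regular} implies $n_p(c)=n_q(c)$ for every simple closed curve $c$, i.e.\ that $H$ and $K$ meet the set $P$ of powers of simple elements in the same subset. The gap is your final step, and it is not a repairable technicality: you need that every finite-index normal subgroup $H \trianglelefteq \pi_1(S)$ is generated by $H \cap P$. This is not a statement about connectivity or change of coordinates in $\mathcal{C}(S)$; it is a strengthening of the question of whether the simple closed curve homology of a finite cover equals its full homology. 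Indeed, the image of your $L = \langle H \cap P \rangle$ in $H_1(X;\mathbb{Z}) = H/[H,H]$ is exactly the subgroup $\Hom(X)$ generated by elevations of simple closed curves (this is the generating set $G_p$ in Section \ref{section:abelian}), so $L = H$ would force $\Hom(X) = H_1(X)$ for every finite regular cover. Malestein--Putman constructed finite regular covers of punctured surfaces where this equality fails, and for closed surfaces it is a well-known difficult question (with recent work indicating failure there as well); in any case it is unavailable. The paper invokes Looijenga's theorem (Proposition \ref{thm:looijenga}) to get exactly this generation statement for \emph{abelian} covers, which is why an argument of your type appears in the paper only as the proof of Theorem \ref{thm:abelian}.

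There is also a structural reason your reduction loses the theorem: by probing only with powers of simple closed curves you discard the hypothesis for non-simple $\gamma$, thereby reducing Theorem \ref{thm:regular} to the regular-cover analogue of Theorem \ref{thm:abelian} --- a statement the paper does not prove (and which is not known) outside the abelian case. The paper's own proof uses the non-simple part of the hypothesis essentially: for a simple closed curve $\alpha_X \subset X$, its projection $\alpha_S = p(\alpha_X)$ is in general a non-simple closed curve on $S$, and Lemma \ref{lemma:qi-curve-cx} applies the hypothesis to $\alpha_S$ to conclude that all of its elevations to $Y$ are simple. That is what makes $\alpha_X \mapsto q'\left(\left(p'\right)^{-1}(\alpha_X)\right)$ a coarsely well-defined quasi-isometry $\mathcal{C}(X) \to \mathcal{C}(Y)$; Rafi--Schleimer rigidity (Theorem \ref{thm:RS-quasi}) then yields a homeomorphism $X \cong Y$, and a Teichm\"uller-theoretic boundary argument upgrades this to equivalence of the covers, with no generation-by-simple-elements claim ever needed. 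To salvage your approach you would have to either prove the generation statement for the subgroups at hand (false in general) or re-introduce non-simple curves into the probing; as written, the last step is not merely unproven but contradicted by known examples.
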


We prove a generalization of Theorem~\ref{thm:regular} to non-simple curves, which is stated precisely at the end of Section \ref{motivation} (see Theorem~\ref{thm:nonsimple}). Another variation of Question \ref{simple-lifting} would be to consider lifts of simple curves rather than curves that lift simply. This leads us to ask the following.

\begin{question} \label{lifts-of-simple}
What information about $p:X \to S$ and $q:Y \to S$ can be derived from understanding how \emph{simple} curves on $S$ lift to $X$ and $Y$? \end{question}

Our second result answers this question in the setting of abelian covers. Given a cover $f: \Sigma \to S$ and a simple closed curve $ \gamma \subset S $, we let $n_f(\gamma)$ denote the minimum positive integer $k$ such that $\gamma$ admits an elevation of degree $k$ along $f$. 

\begin{theorem} \label{thm:abelian} Let $p: X\to S$ and $q: Y \to S$ be finite-degree abelian covers of $S$. If $n_p(\gamma) = n_q(\gamma)$ for all simple closed curves $\gamma \subset S$, then $p$ and $q$ are equivalent covers. \end{theorem}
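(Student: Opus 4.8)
The plan is to recast everything in terms of first homology and reduce the statement to an equality of finite-index subgroups of $H_1(S;\mathbb{Z})\cong\mathbb{Z}^{2g}$. Since $p$ and $q$ are abelian they are regular, and the homomorphisms $\pi_1(S)\to\mathrm{Deck}(p)$ and $\pi_1(S)\to\mathrm{Deck}(q)$ factor through the abelianization; thus $p$ is determined by the kernel $K_p=\ker\!\big(\phi_p\colon H_1(S;\mathbb{Z})\to A\big)$ of a surjection onto a finite abelian group $A$, and likewise $q$ by a subgroup $K_q\le H_1(S;\mathbb{Z})$. Because these correspond to \emph{normal} subgroups of $\pi_1(S)$, conjugacy is equality, so it suffices to prove $K_p=K_q$.

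First I would compute $n_p(\gamma)$ homologically. For a regular cover the deck group permutes the elevations of a fixed curve transitively, so all elevations of $\gamma$ share a common degree, namely the order of the image of $[\gamma]$ in the deck group. Writing $m_K(v)=\min\{k>0:kv\in K\}$ for a nonzero class $v$ and a finite-index subgroup $K$, this says $n_p(\gamma)=m_{K_p}([\gamma])$ and $n_q(\gamma)=m_{K_q}([\gamma])$. The second ingredient is the classical fact (a consequence of the change-of-coordinates principle, i.e. transitivity of the symplectic group on primitive vectors) that a nonseparating simple closed curve represents a primitive class and, conversely, every primitive class in $H_1(S;\mathbb{Z})$ is represented by a nonseparating simple closed curve. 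Combining these, the hypothesis $n_p=n_q$ on all simple closed curves yields $m_{K_p}(v)=m_{K_q}(v)$ for \emph{every} primitive $v\in H_1(S;\mathbb{Z})$.

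The final step is a reconstruction lemma: a finite-index subgroup $K\le H_1(S;\mathbb{Z})$ is completely determined by the function $v\mapsto m_K(v)$ on primitive classes. Indeed, for primitive $v$ the intersection $K\cap\mathbb{Z} v$ is the cyclic group $m_K(v)\,\mathbb{Z} v$, and since every nonzero element of $H_1(S;\mathbb{Z})$ lies on the line $\mathbb{Z} v$ of a unique primitive $v$ up to sign, we obtain $K=\{0\}\cup\bigcup_{v\text{ primitive}}\big(K\cap\mathbb{Z} v\big)$. Applying this to both $K_p$ and $K_q$ and using the equality of their $m$-functions gives $K_p=K_q$, whence $p$ and $q$ are equivalent.

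I expect the main obstacle to be conceptual rather than computational: one must check that simple closed curves already probe \emph{every} primitive direction in homology, so that the available test data is rich enough, which is exactly where the realization of primitive classes by simple closed curves is used; and one must be careful that the "elevation degree" in a regular cover is intrinsically the order of $[\gamma]$ in the deck group and does not depend on a choice of elevation or basepoint. Once these two points are pinned down, the line-by-line reconstruction of $K$ is routine.
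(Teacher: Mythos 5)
Your argument is correct, but it takes a genuinely different route from the paper's. The paper fixes basepoints, lets $P \subset \pi_1(S,s_0)$ denote the set of powers of simple elements, and invokes a theorem of Looijenga that for a finite abelian cover the simple curve homology $\Hom(X)$ --- the subgroup of $H_1(X;\mathbb{Z})$ generated by elevations of simple closed curves --- is all of $H_1(X;\mathbb{Z})$; the hypothesis $n_p = n_q$ is then used, exactly as in your second paragraph via regularity, to identify $P \cap p_*\pi_1(X,x_0) = P \cap q_*\pi_1(Y,y_0)$, so the pushed-forward generating sets agree, giving $p_*H_1(X;\mathbb{Z}) = q_*H_1(Y;\mathbb{Z})$, which determines the cover. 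You avoid Looijenga's theorem altogether: you work inside the lattice $H_1(S;\mathbb{Z})$ from the start, convert $n_p(\gamma)$ into the minimal-multiple function $m_{K_p}$ via the deck-group description of elevation degree (a step the paper also needs, implicitly, in the middle equality of its displayed computation), and then substitute two elementary facts --- every primitive class is represented by a nonseparating simple closed curve, and a finite-index sublattice is the union of its intersections with the lines $\mathbb{Z}v$ over primitive $v$ --- for the generation statement. The trade-off: the paper's proof is shorter given the citation and fits the $\Hom$ framework it sets up in Section 5, while yours is self-contained and elementary, and it even shows slightly more, namely that the hypothesis is only needed for nonseparating simple closed curves (separating curves carry no information, since their classes vanish and hence $n_p = n_q = 1$ automatically). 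Both proofs share the same skeleton --- reduce equivalence of abelian covers to equality of finite-index subgroups of $H_1(S;\mathbb{Z})$ --- so the divergence lies precisely in the key lemma used to see that simple closed curves probe that subgroup completely.
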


\subsection{Motivation, Sunada's construction, and non-simple curves} 
\label{motivation}

The authors arrived at Theorems \ref{thm:regular} and \ref{thm:abelian} while studying \textit{isospectral} hyperbolic surfaces: hyperbolic surfaces that have the same unmarked length spectra but which are not necessarily isometric. Almost all known examples of isospectral pairs come from a construction (or some variant thereof) due to Sunada \cite{Sunada} which we briefly summarize here. See \cite{Buser} for a more in depth introduction. 

Let $G$ be a finite group containing a pair of \textit{almost conjugate} subgroups $H, K$, meaning that the cardinality of the intersection of $ H $ with any conjugacy class $ [ g ] $ in $ G $ agrees with that of $ K $:
\[ |H \cap [g]| = |K \cap [g]|. \]
Let $S$ be an orientable surface of finite type so that $\pi_1(S)$ admits a surjective homomorphism $\rho: \pi_1(S) \rightarrow G$. Then if one equips $S$ with a hyperbolic metric, the pullbacks of that metric to the covers corresponding to the subgroups $\rho^{-1}(K), \rho^{-1}(H) < \pi_1(S)$ will be isospectral. So long as $H,K$ are not conjugate in $G$, for a generic choice of the initial metric, these hyperbolic surfaces will not be isometric. 

An interesting open question is whether or not there exist non-isometric surfaces which are \textit{simple length isospectral}, meaning that the multi-set of lengths corresponding only to simple closed geodesics coincide. Sunada's construction provides a natural way to test this question. Indeed, letting $S_H$ and $S_K$ denote the isospectral covers associated to $\rho^{-1}(H)$ and $ \rho^{-1}(K)$ as above, Sunada's construction yields a length-preserving bijection $\phi$ between the sets of closed geodesics on $S_H$ and $S_K$. If for any such $S_H $ and $ S_K$ where $H$ and $K$ are not conjugate, $\phi$ happens to send the lengths corresponding to \textit{simple} closed geodesics on one surface to the simple closed geodesics on the other, one immediately obtains an answer to the question: such an example would imply that simple length isospectrality need not imply an isometry.

Along these lines, Maungchang \cite{Maungchang} explored the example $ G= (\mathbb{Z}/8\mathbb{Z})^{\times} \ltimes \mathbb{Z}/8\mathbb{Z} $, where the semi-direct product is with respect to the standard action of the group of multiplicative units, and where 
\begin{align*}
    H & = \{(1,0), (3,0), (5,0), (7,0)\}, & K &= \{(1,0), (3,4), (5,4), (7,0)\}
\end{align*}
Letting $S$ be the closed surface of genus $2$ and $ \pi_1(S) = \langle a,b,c,d \mid [a,b][c,d] =1 \rangle $ be the standard presentation of its fundamental group, one obtains the homomorphism $\rho: \pi_1(S) \rightarrow G$ defined by
\begin{align*}
\rho ( a ) & = ( 3 , 0 ) & \rho ( b ) & = ( 5 , 0 ) & \rho ( c ) & = ( 1 , 0 ) & \rho ( d ) & = ( 1 , 1 )
\end{align*}
Then Maungchang directly demonstrates that for a generic choice of initial metric on the genus $2$ surface, the hyperbolic surfaces $S_H, S_K$ are \textit{not} simple length isospectral. His strategy is as follows: 
\begin{enumerate}
\item Exhibit a closed curve $\gamma$ on $S$ that admits a different number of simple elevations to $S_H$ than it does to $S_K$. The length-preserving bijection $\phi$ coming from Sunada's construction has the property that for any closed curve on $S$, $\phi$ relates its set of elevations on $S_H$ to those on $S_K$. Thus, the existence of $\gamma$ implies that if $S_H$ and $S_K$ are simple length isospectral, then any length-preserving bijection between the sets of simple closed geodesics is not induced by $\phi$. 

\item A bound on the degree of the covering spaces precludes all but finitely many curves on $S$ from having simple elevations that could possibly make up for the discrepancy coming from simple elevations of $\gamma$ established by (1). By checking these finitely many possibilities and verifying for a fixed hyperbolic metric that the lengths do not make up for this discrepancy, Maungchang obtains the desired result by appealing to real-analyticity of length functions over Teichm{\"u}ller space (if lengths disagree on one metric, they disagree for almost all metrics).
\end{enumerate}

The authors conjecture that simple length isospectrality implies isometry for hyperbolic surfaces:

\begin{conjecture} \label{conj:simple}
Hyperbolic surfaces with the same multi-set of simple lengths are isometric.
\end{conjecture}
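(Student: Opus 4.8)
The plan is to attack the conjecture by combining the coarse information encoded in the asymptotic growth of the simple length spectrum with the rigidity of the \emph{marked} simple length spectrum, thereby reducing the problem to showing that any length-preserving matching of simple closed geodesics must be geometric. As a first step I would recover the topological type: by Mirzakhani's asymptotics, the number $s_X(L)$ of simple closed geodesics of length at most $L$ satisfies $s_X(L) \sim c(X)\, L^{6g-6}$, so the growth exponent pins down the genus $g$, and the leading coefficient $c(X)$ — a Thurston-measure invariant computing the volume of the unit ball of measured laminations with respect to the length function of $X$ — is forced to agree for $X$ and $Y$. This guarantees $X$ and $Y$ are homeomorphic and fixes a first batch of metric invariants, but it is very far from an isometry.

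The core should then be a rigidity principle. Recall that the length functions of finitely many simple closed curves (on the order of $9g-9$) give global coordinates on Teichm\"uller space, so the assignment $\gamma \mapsto \ell_\gamma(X)$, taken over all isotopy classes of simple closed curves, determines the point $X \in \mathcal{T}_g$. Consequently, two non-isometric surfaces can share an \emph{unmarked} simple length spectrum only through a length-preserving bijection $\psi$ between their simple closed geodesics that is \emph{not} induced by any homeomorphism: if $\psi$ were realized by some $h : X \to Y$, then $X$ and $Y$ would have identical marked simple length spectra and hence be isometric. My strategy would therefore be to rule out such an exotic $\psi$ — that is, to prove that any length-preserving bijection between the simple closed geodesics of $X$ and $Y$ must preserve geometric intersection number, so that it respects the combinatorics of the curve complex and is realized by a homeomorphism by a rigidity theorem for automorphisms of the curve complex in the spirit of Ivanov. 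Marked simple length rigidity then upgrades that homeomorphism to an isometry.

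The main obstacle is precisely this forcing step: explaining why a length-preserving matching of simple geodesics cannot scramble the topology. This is exactly where the Sunada phenomenon must be defeated, and it is the reason the statement remains a conjecture rather than a theorem. In Sunada's construction the canonical length-preserving bijection $\phi$ between \emph{all} closed geodesics need not send simple curves to simple curves — this is the content of Maungchang's example, and indeed our Theorems~\ref{thm:regular} and \ref{thm:abelian} detect covers through exactly this simple-versus-non-simple discrepancy — which is strong heuristic evidence that simplicity is a rigid, topology-detecting constraint. Making this rigorous appears to require controlling how lengths constrain configurations of curves, for instance through collar-lemma estimates, the McShane--Mirzakhani identities relating simple lengths, and the real-analytic dependence of simple length functions over Teichm\"uller space to preclude accidental coincidences on positive-dimensional families. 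I expect that establishing the intersection-number-preserving property of $\psi$, and not the Teichm\"uller-theoretic endgame, is the genuinely hard and currently open part.
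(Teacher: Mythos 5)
The statement you are addressing is Conjecture~\ref{conj:simple}, which the paper itself does \emph{not} prove: it is posed as an open problem, and the paper's contribution toward it is a program --- establish the conjecture first for isospectral pairs arising from Sunada's construction by generalizing Maungchang's two-step strategy, with Theorems~\ref{thm:regular}, \ref{thm:nonsimple}, and \ref{thm:abelian} as partial progress on step (1) and Conjecture~\ref{conj:notregular} flagging what is still missing. Your proposal is likewise a program and not a proof, as you yourself concede: all of the difficulty is concentrated in your unproven ``forcing step,'' while everything surrounding it (Mirzakhani's asymptotics $s_X(L) \sim c(X) L^{6g-6}$ to recover the genus, the $9g-9$ marked simple length rigidity, Ivanov-type rigidity of the curve complex) is a standard reduction. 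So there is a genuine gap, and it is exactly the one you flag; no credit can be assigned for the conjecture itself.

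Beyond incompleteness, the forcing step is stated too strongly and is false as written. You propose to show that \emph{any} length-preserving bijection $\psi$ between the simple closed geodesics of $X$ and $Y$ preserves geometric intersection number. Take $X = Y$ a hyperbolic surface admitting a nontrivial isometry $\varphi$ (for instance, genus $2$ with an order-two rotation exchanging the two handles, equipped with a $\varphi$-invariant metric), let $a$ be a simple closed geodesic with $b = \varphi(a) \neq a$, and let $c$ be a simple closed geodesic with $i(a,c) \neq i(b,c)$. The bijection swapping $a$ and $b$ and fixing every other geodesic is length-preserving but does not preserve intersection numbers; the same trick works whenever the simple length spectrum has any multiplicity. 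Hence the universally quantified statement cannot be a theorem, and what your reduction actually needs is the existential statement --- that \emph{some} length-preserving bijection is induced by a homeomorphism --- which cannot be attacked by analyzing an arbitrary given $\psi$ and is essentially a restatement of the conjecture. Note finally that your route differs from the paper's intended one: rather than curve-complex rigidity applied to a matching of geodesics, the paper aims to kill candidate counterexamples among Sunada pairs by exhibiting a curve with different numbers of simple elevations to the two covers and then invoking real-analyticity of length functions over Teichm\"uller space.
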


A reasonable approach to Conjecture \ref{conj:simple} would be to first establish it for pairs of isospectral surfaces arising from Sunada's construction, since these pairs are natural candidates for counterexamples. One could hope that Maungchang's strategy outlined above is generalizable to any pair of isospectral surfaces arising from Sunada's construction. 

A first step towards achieving such a generalization would be to address step $(1)$ above, and to show that if two covers of a surface are not equivalent, there is some closed curve $\gamma$ on the base surface admitting a different number of simple elevations to the two covers. In the setting where the covers are regular, if one elevation is simple, all must be. This motivates Theorem \ref{thm:regular}, since ``admitting a different number of simple elevations to the two covers" reduces to ``admitting a simple elevation to one cover but not the other". 

Unfortunately, the covering spaces that arise in Sunada's construction are necessarily never regular, as almost conjugate subgroups must be conjugate (and in fact, equal) when one is normal. The authors conjecture that the assumption of regularity can be dropped in Theorem \ref{thm:regular}. 

\begin{conjecture} \label{conj:notregular}
Theorem \ref{thm:regular} holds without the assumption that the covers are regular. 
\end{conjecture}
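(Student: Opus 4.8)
The plan is to deduce the general (possibly irregular) case from the regular case, Theorem~\ref{thm:regular}, by controlling how the set of curves with simple elevations behaves under the intermediate coverings relating a cover to its Galois closure. Write $\Gamma_p = p_\ast \pi_1(X)$ and $\Gamma_q = q_\ast \pi_1(Y)$ for the finite-index subgroups determining $p$ and $q$, and let $\mathcal{L}(p)$ denote the set of free homotopy classes of closed curves on $S$ admitting a simple elevation along $p$; the hypothesis is $\mathcal{L}(p) = \mathcal{L}(q)$, and the goal is that $\Gamma_p$ and $\Gamma_q$ are conjugate. As a first observation, if a curve $\gamma$ has a simple elevation to $X$, then its full preimage under the intermediate cover $\hat{X} \to X$ (where $\hat{X} \to S$ is the regular cover associated to the normal core $N_p = \bigcap_g g\Gamma_p g^{-1}$) is a disjoint union of simple closed curves, so $\gamma$ also has a simple elevation to $\hat{X}$. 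This gives the one-sided containment $\mathcal{L}(p) \subseteq \mathcal{L}(\hat{X}\to S)$, and the first main task is to upgrade this to an equality of the Galois-closure data, from which Theorem~\ref{thm:regular} would yield $N_p = N_q$ and hence a common deck group $G = \pi_1(S)/N$.

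First I would develop an explicit criterion, in the universal cover, for when a curve admits a simple elevation. Realizing $\gamma$ as a geodesic with $[\gamma]=g$, its elevations along $p$ are indexed by the orbits of $\langle g\rangle$ acting on the right coset space $\Gamma_p \backslash \pi_1(S)$, and a given elevation is embedded precisely when none of the self-crossings of $\gamma$, encoded by the pairs of translates $\mathrm{axis}(g)$ and $h\,\mathrm{axis}(g)$ that cross in $\mathbb{H}^2$, is identified upstairs. This translates the condition $\gamma \in \mathcal{L}(p)$ into the existence of a coset $\Gamma_p x$ avoiding a finite, explicitly described family of collision conditions determined jointly by the self-intersection combinatorics of $\gamma$ and by $\Gamma_p$. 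The payoff of this reformulation is twofold: it makes $\mathcal{L}(p)$ a computable invariant of the conjugacy class of $\Gamma_p$, and it exhibits exactly which features of $\Gamma_p$ a simple elevation can detect.

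With the criterion in hand, the crux is a separation statement: for any two non-conjugate finite-index subgroups $\Gamma_p,\Gamma_q$, produce a closed curve $\gamma$ lying in exactly one of $\mathcal{L}(p),\mathcal{L}(q)$. I would build $\gamma$ geometrically, choosing its homotopy type so that one of its self-crossings is forced to survive in every elevation to $Y$ while some elevation to $X$ unwraps it, using Scott's theorem and its effective refinements (Patel \cite{Patel}, Aougab--Gaster--Patel--Sapir \cite{GAPS}) to realize prescribed lifting behavior by curves of controlled self-intersection number, and invoking the real-analyticity of length functions over Teichm\"uller space (as in Maungchang's argument \cite{Maungchang}) to certify embeddedness of the relevant elevation. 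It is natural to first treat the sub-case $N_p = N_q$, where the two covers share a Galois closure and the problem reduces to separating two core-free subgroups $H_p,H_q \le G$ by the unwrapping behavior of curves; this isolates precisely the new phenomenon not covered by Theorem~\ref{thm:regular}.

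The main obstacle I anticipate is global, rather than local, control of embeddedness. Engineering a single self-crossing to behave differently over $X$ and over $Y$ is comparatively easy, but ensuring that the candidate elevation has \emph{no other} surviving self-intersections, so that it is genuinely simple, is a global condition on the curve, and it must be arranged uniformly over all pairs of non-conjugate subgroups, including those sharing a normal core. Controlling all crossings simultaneously, while keeping the degree and complexity small enough for the effective lifting results to apply, is exactly the step where the regularity hypothesis (``one elevation simple implies all simple'') was doing the heavy lifting, and where its removal will require genuinely new input.
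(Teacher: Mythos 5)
This statement is an open conjecture in the paper, not a theorem: the authors state explicitly that it is not yet clear how their arguments can be promoted to prove it, so there is no proof in the paper to compare yours against. More importantly, your proposal is not a proof either; it is a research plan, and by your own admission in the final paragraph the decisive step ``will require genuinely new input.'' Concretely, two gaps. First, the reduction to the regular case does not go through. The containment $\mathcal{L}(p) \subseteq \mathcal{L}(\hat{p})$ (where $\hat{p}\colon \hat{X} \to S$ is the Galois closure) is correct, since the preimage of a simple elevation under $\hat{X} \to X$ is a union of simple closed curves, but nothing in the hypothesis $\mathcal{L}(p) = \mathcal{L}(q)$ relates $\mathcal{L}(\hat{p})$ to $\mathcal{L}(\hat{q})$: the containment can be proper, and equality of the two smaller sets does not pass to the closures. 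So your ``first main task'' of obtaining $N_p = N_q$ from Theorem~\ref{thm:regular} is itself unproven. And even if it were granted, the remaining case --- distinguishing non-conjugate subgroups that share a normal core, which is exactly the situation of Sunada pairs and the reason the conjecture is interesting --- is left entirely open: your ``separation statement'' is a restatement of the conjecture in contrapositive form, and the paragraph sketching how to build the separating curve contains no argument.

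Second, the tools you invoke cannot do what you ask of them. Real-analyticity of length functions over Teichm\"uller space (as used by Maungchang) compares lengths of geodesics as the metric varies; it cannot ``certify embeddedness,'' because whether a closed geodesic is simple is a topological property of its free homotopy class, independent of the choice of hyperbolic metric. Likewise, Scott's theorem and its effective refinements produce, for a given curve, \emph{some} finite cover in which it lifts simply; they do not allow you to prescribe lifting behavior relative to two covers $X$ and $Y$ fixed in advance, which is what the separation statement requires. The universal-cover criterion you outline (elevations indexed by $\langle g \rangle$-orbits on $\Gamma_p \backslash \pi_1(S)$, simplicity as avoidance of finitely many collision conditions) is a correct and potentially useful reformulation, but it is bookkeeping rather than separation: the conjecture is precisely the assertion that these collision data determine the conjugacy class of $\Gamma_p$, and that assertion is where your proposal stops.
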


While it is not yet clear how our arguments can be promoted to prove Conjecture \ref{conj:notregular}, we note that they can be used to show the following generalization of Theorem \ref{thm:regular}:  

\begin{theorem} \label{thm:nonsimple} Let $N, M \in \mathbb{N}$ and let $p:X \rightarrow S, q:Y \rightarrow S$ be regular finite covers of a closed orientable surface $S$ with genus at least $2$. Suppose that for any essential closed curve $\gamma \subset S$, $\gamma$ admits an elevation to $X$ with at most $N$ self intersections if and only if it admits an elevation to $Y$ with at most $M$ self intersections. Then $p$ and $q$ are equivalent covers. 
\end{theorem}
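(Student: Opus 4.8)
The plan is to show that $p$ and $q$ are equivalent by recovering, from the bounded--self--intersection lifting data, the finite--index normal subgroups $H_X = \ker(\rho_X)$ and $H_Y = \ker(\rho_Y)$ of $\pi_1(S)$ corresponding to the two regular covers, where $\rho_X : \pi_1(S) \to G_X$ and $\rho_Y : \pi_1(S) \to G_Y$ denote the defining surjections onto the deck groups. Since the covers are regular, equivalence is exactly the equality $H_X = H_Y$, and the deck group acts transitively on elevations, so that ``$\gamma$ admits an elevation with at most $N$ self--intersections'' is a well--defined condition on the free homotopy class of $\gamma$. I would mirror the proof of Theorem~\ref{thm:regular}, systematically replacing ``simple elevation'' by ``elevation with at most $N$ (resp.\ $M$) self--intersections,'' and aim to reconstruct $\rho_X$ up to an isomorphism of its target.

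The first concrete step is to analyze powers of a fixed simple closed curve $\alpha$. Since $\alpha$ is embedded, $p^{-1}(\alpha)$ is a disjoint union of embedded circles, each a component covering $\alpha$ with degree $o_X(\alpha) := \mathrm{ord}(\rho_X(\alpha)) = n_p(\alpha)$. An elevation of $\alpha^k$ lies in a single such component $C$ and wraps around it exactly $m = k/\gcd(k, o_X(\alpha))$ times, so it is freely homotopic on $X$ to $C^m$ and therefore has minimal self--intersection number exactly $m-1$. Hence $\alpha^k$ admits an elevation to $X$ with at most $N$ self--intersections if and only if $k/\gcd(k, o_X(\alpha)) \le N+1$, and the set of such $k$ is finite with maximum $(N+1)\,o_X(\alpha)$. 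Comparing with the identical computation on $Y$ and invoking the hypothesis curve by curve (applied to $\gamma = \alpha^k$), for every simple closed curve $\alpha$ I obtain the clean relation $(N+1)\,n_p(\alpha) = (M+1)\,n_q(\alpha)$; in particular the order functions $n_p$ and $n_q$ on simple closed curves are proportional, with ratio $\lambda = (M+1)/(N+1)$.

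To promote proportionality of orders into equality of the covers, I would extend the detection to configurations of simple closed curves---powers of curves filling subsurfaces, and products along chains $\alpha_1, \ldots, \alpha_r$---in order to read off not merely the orders but the non--abelian relations among the elements $\rho_X(\alpha_i)$, thereby reconstructing $\rho_X$ up to post--composition by an isomorphism $G_X \to G_Y$ (this is precisely the content carried by the proof of Theorem~\ref{thm:regular} in the case $N = M = 0$; in the special case where the covers are abelian, once $\lambda = 1$ is known the conclusion follows at once from Theorem~\ref{thm:abelian}). This is where Teichm\"uller theory and the curve complex enter: fixing a hyperbolic metric, I would realize the test curves and their elevations by geodesics and certify, via collar--lemma and injectivity--radius estimates, that the minimal--self--intersection elevation of each test curve is the fully unwrapped one predicted by the monodromy, any partially wrapped elevation having self--intersection number that grows with the topological complexity of the test curve.

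The main obstacle is reconciling the two different bounds, that is, showing $\lambda = 1$. The difficulty is genuine: for a single curve, the data ``$k/\gcd(k, o_X(\alpha)) \le N+1$'' determines only the product $(N+1)\,o_X(\alpha)$, so no single family of powers can separate the pair $(N, o_X(\alpha))$ from $(M, o_Y(\alpha))$ once $(N+1)\,o_X(\alpha) = (M+1)\,o_Y(\alpha)$. The resolution must therefore be global: because $N$ and $M$ are fixed constants while the test curves may be taken of unbounded complexity, any genuine discrepancy between the covers ought to be amplifiable---by distributing it over many disjoint or chained simple closed curves so that the self--intersection numbers of the relevant elevations accumulate---until it exceeds $\max(N, M)$, producing a curve with an elevation of at most $N$ self--intersections on one cover but none with at most $M$ on the other, contradicting the hypothesis. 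Making this amplification precise, and in particular controlling how the self--intersections of elevations of long product curves add up, is the crux of the argument; once $\lambda = 1$ is established one has $n_p = n_q$ on all simple closed curves together with matching relational data, and the reconstruction of the previous step yields $H_X = H_Y$, so that $p$ and $q$ are equivalent.
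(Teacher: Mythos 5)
Your opening computation is correct and is genuinely nice: in a regular cover every elevation of $\alpha^{k}$ wraps $m = k/\gcd\left(k, n_{p}(\alpha)\right)$ times around a single component of $p^{-1}(\alpha)$, the $m$-th power of an essential simple closed curve has minimal self-intersection number $m-1$, and comparing the two finite sets of admissible exponents (whose maxima are $(N+1)\,n_{p}(\alpha)$ and $(M+1)\,n_{q}(\alpha)$) yields the relation $(N+1)\,n_{p}(\alpha) = (M+1)\,n_{q}(\alpha)$ for every simple closed curve $\alpha$. But everything after that is a plan rather than a proof, and the two steps you leave open are not technical details --- together they are the entire theorem. First, you explicitly flag the ``amplification'' needed to force $\lambda = 1$ as the crux and supply no mechanism for it: no construction is given that produces a curve with an elevation of at most $N$ self-intersections on one side but none with at most $M$ on the other. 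Second, and more fundamentally, even if you had $n_{p} \equiv n_{q}$ on all simple closed curves, that hypothesis is only known to pin down the cover when the covers are \emph{abelian} (Theorem~\ref{thm:abelian}); for general regular covers, the passage from order data plus unspecified ``relational data'' (read off via chains of curves, collar-lemma and injectivity-radius estimates) to the equality $p_{*}\pi_{1}(X) = q_{*}\pi_{1}(Y)$ is precisely the kind of rigidity statement the paper needs heavy machinery to prove, and you give no argument for it. Your parenthetical claim that this reconstruction ``is precisely the content carried by the proof of Theorem~\ref{thm:regular}'' is inaccurate: that proof never reconstructs monodromy from relations among curves; it builds a quasi-isometry $\mathcal{C}(X) \to \mathcal{C}(Y)$ and invokes curve-complex rigidity (Theorem~\ref{thm:RS-quasi}) together with a Teichm\"uller-space boundary argument.

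For contrast, the paper's proof of Theorem~\ref{thm:nonsimple} bypasses orders of curves entirely. It sets $k = \max(N,M)$, introduces the graph $\Gamma_{k,j}$ of curves with at most $k$ self-intersections (edges given by geometric intersection at most $j$), shows via a $1$-Lipschitz retraction onto $\mathcal{C}_{j}$ that the inclusion $\mathcal{C}(S) \hookrightarrow \Gamma_{k,j}(S)$ is a quasi-isometry (Proposition~\ref{prop:totallygeo} and Corollary~\ref{cor:QI}), and uses the hypothesis --- through the common cover $W$, exactly as in Lemma~\ref{lemma:qi-curve-cx} --- to build a quasi-isometry $\Gamma_{k,j}(X) \to \Gamma_{k,j}(Y)$, hence $\mathcal{C}(X) \to \mathcal{C}(Y)$; after that, the endgame of Theorem~\ref{thm:regular} applies verbatim. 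Note that in this route the asymmetry $N \neq M$ costs nothing, since it is absorbed into $k = \max(N,M)$, whereas in your route it surfaces as the obstruction $\lambda \neq 1$ that you are unable to remove. Your first step is worth keeping as an observation (it shows, once the theorem is known, that the hypothesis is actually unsatisfiable unless $N = M$), but it cannot substitute for the rigidity argument.
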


\subsection*{Acknowledgment} Aougab is supported by NSF grant DMS 1807319. Loving is supported by an NSF Postdoctoral Research Fellowship under Grant No. DMS 1902729. 

\section{Preliminaries} 

We devote this section to recalling concepts and results that are used in the proof of Theorem~\ref{thm:regular}. 

\subsection{Hyperbolic spaces and Gromov boundaries} \label{hyp}
Let $(X,d)$ be a (not necessarily proper) geodesic metric space. Then for some $\delta>0$, $X$ is said to be $\delta$-\textit{hyperbolic} if for any geodesic triangle in $X$, the $\delta$-neighborhood of the union of any two sides contains the third side. 

Any $\delta$-hyperbolic space admits a \textit{Gromov boundary}, $\partial_{\infty}X$, generalizing the boundary sphere at infinity of hyperbolic space. To define $\partial_{\infty}X$, one first fixes a basepoint $p \in X$; then given two points $x,y \in X $, their \textit{Gromov product} with respect to $p$ is
\[
( x , y )_{ p } = \frac{ d ( x , p ) + d ( y , p ) - d ( x , y ) }{ 2 } .
\]
A sequence of points ${x_i} \subset X$ is said to be \textit{admissible} if $ \lim_{ i , j \to \infty }{ ( x_{ i } , x_{ j } )_{ p } } = \infty  $, and two admissible sequences ${x_i}, {y_i}$ are said to be equivalent if $\lim_{i \rightarrow \infty}(x_i, y_i)_p = \infty.$ 

Then as a set, $\partial_{\infty}X$ is the set of equivalence classes of admissible sequences. Given $K>0$ and $b \in \partial_{\infty}X$, define 
\[
U_{ K } ( b ) = \left \{ \left[ \left( x_{ i } \right) \right] \in \partial_{ \infty } X \:\middle |\: \liminf_{ i , j \to \infty }{ \left( x_{ i } , y_{ j } \right )_{ p } } \geq K \textrm{ for some } \left( y_{ j } \right ) \in b \right \} .
\]

These subsets form a basis for a topology on $\partial_{\infty}X$ with respect to which isometries of $X$ induce homeomorphisms. Moreover, a quasi-isometric embedding $\varphi: X \to Y$ between two $\delta$-hyperbolic spaces induces a continuous embedding $\widehat{\varphi}: \partial X \to \partial Y$. We briefly describe the map $\widehat{\varphi}$. 

Given a point $z \in \partial X$, choose a geodesic ray $\gamma \subset X$ whose endpoint on $\partial X$ is $z$ (which is to say that any unbounded sequence of points along $\gamma$ is in the class of $z$). Then the image of $\gamma$ under $\varphi$ is a quasi-geodesic ray $\gamma'$ in $Y,$ which implies that any unbounded sequence of points along $\gamma'$ is admissible and that therefore $\gamma'$ is naturally associated to some point in $\partial Y$ which we call $z'$. The map $\widehat \varphi$ sends $z$ to $z'$. It is straightforward to show that $\widehat \varphi$ is well-defined.

\subsection{Teichm{\"u}ller space and geodesic laminations} 

Let $S$ be a connected surface with negative Euler characteristic. The \textit{Teichm\"uller space} of $S$, denoted $\mathcal T(S)$, can be defined as the space of equivalence classes of marked hyperbolic structures on $S$, where two hyperbolic structures $f:S \to X$ and $g:S \to Y$ are said to be equivalent if $f \circ g^{-1}$ is isotopic to an isometry. An equivalent characterization of $\mathcal T(S)$ is as the space of $\text{PGL}(2, \mathbb R)$ conjugacy classes of discrete faithful representations of $\pi_1(S)$ into $\text{PSL}(2, \mathbb R)$. We will make use of both of these perspectives throughout Section \ref{section:regular}.

Abusing notation slightly, let $S$ now be a complete hyperbolic surface of finite type and without boundary. A \textit{geodesic lamination} of $S$ is a compact subset of $S$ consisting of a disjoint union of simple geodesics. A geodesic lamination $\lambda$ is said to be \textit{minimal} if every leaf is dense in $\lambda$, and \textit{filling} if it intersects every simple closed geodesic. Laminations with both of these properties are called \textit{ending laminations}.

A \textit{measured geodesic lamination} is a geodesic lamination $\lambda$ equipped with a positive Borel measure $\mu$ on the set of arcs transverse to $\lambda$, which is invariant under transverse homotopy. The set of all measured geodesic laminations, denoted $\mathcal{ML}(S)$, admits a natural topology for which two points are close together if they induce approximately the same measure on sets of finitely many transverse arcs. This topology does not depend on the initial choice of hyperbolic metric and thus we can associate $\mathcal{ML}(S)$  to the underlying topological surface. 
%Maybe add citation to Thurston or to FLP

Two measured geodesic laminations are \textit{projectively equivalent} if they have the same underlying geodesic lamination and the transverse measures differ only by a scaling. Thurston showed that the space of projective measured laminations, denoted $\mathcal{PML}(S)$ (equipped with the quotient topology) yields a compactification of $\mathcal{T}(S)$ and is homeomorphic to a sphere of dimension $\mbox{dim}(\mathcal{T}(S))-1$. By $\mathcal{PMEL}(S)$, we will mean the subset of $\mathcal{PML}(S)$ consisting of points whose underlying lamination is ending, and we will denote by $\mathcal{EL}(S)$ the image of $\mathcal{PMEL}(S)$ under the quotient map sending a projective measured lamination to its underlying geodesic lamination. A geodesic lamination $\lambda$ is called \textit{uniquely ergodic} if there is only one projective equivalence class of transverse measures it supports.

We conclude this subsection with several useful results about $\mathcal{T}(S), \mathcal{ML}(S)$, and their behaviors under covering maps between surfaces. The first such result can be applied to show that the set of uniquely ergodic ending laminations are dense in the space of measured laminations.

\begin{theorem}[Lindenstrauss-Mirzakhani, Theorem $1.2$ in \cite{LM}] \label{LM-thick}

A measured lamination $\lambda\in \mathcal{ML}(S)$ has a dense orbit in $\mathcal{ML}(S)$ if and only if its support does not contain any simple closed curves.  
\end{theorem}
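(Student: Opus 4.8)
The plan is to prove the two implications separately. The forward direction (dense orbit $\Rightarrow$ no simple closed curve in the support) is concrete, and I would prove its contrapositive by exhibiting a proper, closed, $\Mod(S)$-invariant subset of $\mathcal{ML}(S)$ that swallows the orbit of any lamination with a closed leaf. The reverse direction (no simple closed curve $\Rightarrow$ dense orbit) is where the real difficulty lies, and I would not expect the concrete obstruction to have a literal mirror image; it requires the ergodic/rigidity machinery that is the true content of the theorem.

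For the forward direction, suppose $\text{supp}(\lambda)$ contains a simple closed curve, so $\lambda$ has nonzero atomic part of total weight $w>0$. Define $m:\mathcal{ML}(S)\to\mathbb{R}_{\ge 0}$ sending $\mu$ to the sum of the weights of the simple closed curves in $\text{supp}(\mu)$. This is manifestly $\Mod(S)$-invariant, since a mapping class carries a weighted closed-leaf component $w'c$ of $\mu$ to the component $w'(gc)$ of $g\mu$. The key point is that the superlevel set $C_w=\{\mu : m(\mu)\ge w\}$ is \emph{closed}: if $\mu_n\to\mu$ with each $\mu_n$ carrying an atom $w_n c_n$, $w_n\ge w$, then $i(w_n c_n,\alpha)\le i(\mu_n,\alpha)$ stays bounded for every $\alpha$; since intersection numbers with a fixed filling collection give a proper function on $\mathcal{ML}(S)$, the curves $c_n$ cannot escape to infinity without forcing $w_n\to 0$, so after passing to a subsequence $c_n\equiv c$ and $w_n\to w_\infty\ge w$, whence $\mu$ carries the atom $w_\infty c$ and $m(\mu)\ge w$. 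Thus $C_w$ is closed, $\Mod(S)$-invariant, contains the orbit of $\lambda$, and is proper because any uniquely ergodic filling lamination has $m=0$. Hence the orbit of $\lambda$ is not dense.

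For the reverse direction I would begin with Masur's ergodicity theorem: $\Mod(S)$ acts ergodically on $\mathcal{ML}(S)$ with respect to the fully supported Thurston measure, so the set of laminations with dense orbit is conull, and the entire problem is to promote this from almost every $\lambda$ to every $\lambda$ with no closed leaf in its support. It is worth noting at the outset that the \emph{projective} orbit closure is already dense for essentially every class: using the North--South dynamics of a pseudo-Anosov $\phi$ on $\mathcal{PML}(S)$, any infinite $\Mod(S)$-orbit closure contains the attracting fixed point $[\mu_\phi^+]$ for every $\phi$, and these are dense in $\mathcal{PML}(S)$. Consequently the real content lives entirely in the \emph{scale} distinguishing $\mathcal{ML}(S)$ from $\mathcal{PML}(S)$, which is exactly what the no--simple--closed--curve hypothesis governs, as the computation in the forward direction makes visible.

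To close the gap I would pass from orbit closures to invariant measures: the orbit closure $\overline{\Mod(S)\cdot\lambda}$ is a closed invariant set and therefore carries a locally finite ergodic $\Mod(S)$-invariant measure, and one then invokes the Lindenstrauss--Mirzakhani classification, which asserts that every such measure is, up to scaling, governed by a fixed multicurve $\gamma$ that appears in the support, with the remaining data controlled by the Thurston measure of the complementary subsurface. If $\text{supp}(\lambda)$ contains no simple closed curve, then no such proper ``multicurve locus'' can contain the orbit closure, forcing $\overline{\Mod(S)\cdot\lambda}=\mathcal{ML}(S)$. The main obstacle is precisely this measure-classification input: establishing it requires renormalizing the $\Mod(S)$-action (via train-track subdivision or the Teichm\"uller horocyclic flow) and proving an equidistribution/unique-ergodicity statement in the spirit of the rigidity theory for unipotent flows, and I would expect all of the genuinely new ideas to be concentrated there.
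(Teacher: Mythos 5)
First, a contextual point: the paper does not prove this statement at all --- it is quoted verbatim as Theorem 1.2 of Lindenstrauss--Mirzakhani \cite{LM} and used as a black box (its only role is to give density of the uniquely ergodic ending laminations in $\mathcal{ML}(S)$). So there is no proof in the paper to compare against, and the question is whether your argument stands on its own.

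Your forward direction is essentially sound: the total atomic weight is $\Mod(S)$-invariant, and your properness argument correctly shows its superlevel sets are closed (one small bookkeeping gap: $m(\mu_n)\geq w$ does not produce a single atom of weight $\geq w$, so you must track all atoms of $\mu_n$ simultaneously, discarding those whose weights tend to zero; this is routine because the number of atoms is uniformly bounded by $3g-3$). The reverse direction, however, has a genuine gap, in two places. First, the bridge from orbit closures to invariant measures fails: $\Mod(S)$ is not amenable and $\mathcal{ML}(S)$ is not compact, so a closed invariant set need not carry any nonzero locally finite invariant measure --- this is precisely why, in homogeneous dynamics, topological rigidity (Ratner's orbit-closure theorem) does not follow formally from measure classification but needs separate linearization arguments. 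Second, even granting such a measure, its support is \emph{contained in} the orbit closure, so the classification yields a lower bound on $\overline{\Mod(S)\cdot\lambda}$, not an upper bound; knowing which multicurve locus the measure lives on cannot by itself force $\overline{\Mod(S)\cdot\lambda}=\mathcal{ML}(S)$ unless you additionally produce a measure of \emph{full} support subordinate to the orbit closure, which you do not. Finally, the input you invoke --- the classification of locally finite ergodic invariant measures --- is itself the main theorem of the same Lindenstrauss--Mirzakhani paper, so even with the two gaps repaired your argument would be a reduction of their Theorem 1.2 to their Theorem 1.1, not an independent proof. For the purposes of the present paper, the appropriate move is the one the authors make: cite \cite{LM} and move on.
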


The next result states that covering maps induce isometric embeddings between Teichm\"uller spaces. A proof of Theorem \ref{thm:folklore} can be found on page 2153 of \cite{RafiSchleimer-covers}. 

\begin{theorem}[Folklore] \label{thm:folklore}

A finite covering map $p: X \to S$ induces an isometric embedding $\widetilde{p}: \mathcal{T}(S) \to \mathcal T(X)$. 

\end{theorem}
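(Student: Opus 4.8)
The plan is to realize $\widetilde{p}$ explicitly as pullback of structures and then to show that it preserves the Teichm\"uller metric by proving the two opposite inequalities separately; since a distance-preserving map is automatically injective, this suffices. In the representation-variety model of $\mathcal{T}(S)$, let $\Gamma = p_{*}(\pi_1(X)) < \pi_1(S)$ be the finite-index subgroup determined by the cover. Then $\widetilde{p}$ sends the class of a discrete faithful $\rho \colon \pi_1(S) \to \mathrm{PSL}(2,\mathbb{R})$ to the class of its restriction $\rho|_{\Gamma}$; equivalently, in the marked-surface model it pulls back the conformal structure along $p$. Well-definedness is immediate because $p$ is a local biholomorphism, so the pullback of a conformal structure is again a conformal structure and the construction respects the equivalence defining $\mathcal{T}(S)$.

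For the inequality $d_{\mathcal{T}(X)}(\widetilde{p}(x),\widetilde{p}(y)) \le d_{\mathcal{T}(S)}(x,y)$ I would lift extremal quasiconformal maps. Let $h \colon (S,x) \to (S,y)$ be the Teichm\"uller map realizing the distance downstairs, with maximal dilatation $K = \exp\!\big(2\,d_{\mathcal{T}(S)}(x,y)\big)$. Because $h$ is compatible with the markings, the induced map on $\pi_1$ preserves $\Gamma$, so $h$ lifts to $\widetilde{h} \colon (X,\widetilde{p}(x)) \to (X,\widetilde{p}(y))$, again marking-compatible. The relation $p \circ \widetilde{h} = h \circ p$ expresses $\widetilde{h}$ locally as a conjugate of $h$ by biholomorphic charts, so its Beltrami coefficient agrees pointwise with that of $h$ and $\widetilde{h}$ has the same maximal dilatation $K$. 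Hence $d_{\mathcal{T}(X)}(\widetilde{p}(x),\widetilde{p}(y)) \le \tfrac12 \log K = d_{\mathcal{T}(S)}(x,y)$.

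The reverse inequality is the heart of the argument, and for it I would combine Kerckhoff's formula with the behaviour of extremal length under covers. Kerckhoff's theorem gives
\[
d_{\mathcal{T}(X)}(\widetilde{p}(x),\widetilde{p}(y)) = \tfrac12 \log \sup_{\mu}\frac{\mathrm{Ext}_{\widetilde{p}(y)}(\mu)}{\mathrm{Ext}_{\widetilde{p}(x)}(\mu)},
\]
the supremum ranging over measured laminations $\mu$ on $X$. Restricting to pullbacks $\mu = p^{*}\nu$ of laminations $\nu$ on $S$, the key input is the scaling law $\mathrm{Ext}_{p^{*}\sigma}(p^{*}\nu) = d\cdot \mathrm{Ext}_{\sigma}(\nu)$ for every conformal structure $\sigma$, where $d = \deg p$. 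Granting this, the factor $d$ cancels in the ratio, so the restricted supremum already equals $\sup_{\nu} \mathrm{Ext}_{y}(\nu)/\mathrm{Ext}_{x}(\nu)$; Kerckhoff's formula applied downstairs then yields $d_{\mathcal{T}(X)}(\widetilde{p}(x),\widetilde{p}(y)) \ge d_{\mathcal{T}(S)}(x,y)$, and combining the two bounds gives the claimed isometry.

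The step I expect to be the main obstacle is the scaling law, which I would prove through the Hubbard--Masur correspondence. Let $q$ be the holomorphic quadratic differential on $(S,\sigma)$ whose horizontal foliation represents $\nu$, so that $\mathrm{Ext}_{\sigma}(\nu) = \|q\| = \int_{S}|q|$ in the standard normalization. Its pullback $p^{*}q$ is a holomorphic quadratic differential on $(X,p^{*}\sigma)$ whose horizontal foliation is $p^{*}\nu$, and since $p$ is an unbranched degree-$d$ cover the area integral scales cleanly: $\|p^{*}q\| = \int_{X}|p^{*}q| = d\int_{S}|q| = d\|q\|$, giving exactly $\mathrm{Ext}_{p^{*}\sigma}(p^{*}\nu) = d\,\mathrm{Ext}_{\sigma}(\nu)$. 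The one genuinely delicate point is the possibility that some lamination on $X$ not of pullback type is more efficient than every pullback; but this is precisely the worry the two-sided argument sidesteps, since the upper bound of the second paragraph already controls the distance upstairs from above. (Alternatively, one can obtain equality in a single step by pulling back the entire Teichm\"uller geodesic from $x$ to $y$: the pullback of its defining quadratic differential generates a Teichm\"uller geodesic from $\widetilde{p}(x)$ to $\widetilde{p}(y)$ with the same dilatation, and uniqueness of Teichm\"uller geodesics identifies its length with $d_{\mathcal{T}(X)}(\widetilde{p}(x),\widetilde{p}(y))$.)
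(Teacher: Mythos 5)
Your proof is correct. The paper itself gives no argument for this statement---it is labeled folklore, with a pointer to page 2153 of Rafi--Schleimer \cite{RafiSchleimer-covers}---and the argument given there is essentially the one you sketch in your closing parenthesis: lifts of Teichm\"uller maps are again Teichm\"uller maps (the Beltrami form $k\,\overline{q}/|q|$ pulls back to $k\,\overline{p^{*}q}/|p^{*}q|$), so Teichm\"uller's uniqueness theorem shows the lifted extremal map is still extremal, giving both inequalities at once. Your main route is genuinely different in the hard direction: you obtain the upper bound by lifting the extremal map (correct, though for a non-regular cover the clean justification that $h$ lifts is the homotopy lifting property applied to $h \simeq \mathrm{id}_{S}$, since $h_{*}$ preserves $\Gamma$ only up to conjugacy and basepoint bookkeeping), and the lower bound from Kerckhoff's formula together with the scaling law $\mathrm{Ext}_{p^{*}\sigma}(p^{*}\nu) = (\deg p)\,\mathrm{Ext}_{\sigma}(\nu)$, which you correctly reduce to Hubbard--Masur uniqueness (identifying the differential realizing $p^{*}\nu$ on $p^{*}\sigma$ with $p^{*}q$) and the area identity $\|p^{*}q\| = (\deg p)\|q\|$. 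Both steps are sound, and the restriction-of-the-supremum trick legitimately sidesteps the question of whether laminations on $X$ not of pullback type could be more efficient, since the easy direction caps the distance from above. The trade-off between the two approaches: the uniqueness argument is shorter and stays within Teichm\"uller theory proper, while your route invokes heavier inputs (Kerckhoff's formula, Hubbard--Masur, and $\mathrm{Ext}(\nu) = \|q_{\nu}\|$) but makes explicit the mechanism that extremal-length ratios are cover-invariant, a fact of independent use.
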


Building on Theorem \ref{thm:folklore}, the following result of Biswas--Mj--Nag allows for the extension of any map between Teichm\"uller spaces induced by a finite cover to a map between spaces of projective measured laminations, interpreted as boundaries at infinity $\partial \mathcal{T}(S)$ of the corresponding Teichm\"uller spaces. In particular, given a finite covering map $p: X \to S$ there is a natural map $\partial \mathcal{T}(S) \to \partial \mathcal T(X)$. 

\begin{theorem}[Biswas--Mj--Nag, \cite{Mj}] \label{thm:Mj}

A finite covering map $p: X \to S$ between closed, oriented hyperbolic surfaces induces a natural continuous injection $\widetilde{p}: \overline{\mathcal{T}(S)} \to \overline{\mathcal{T}(X)}$ between the corresponding Thurston compactified Teichm\"uller spaces. Furthermore, this map is the continuous extension of the holomorphic embedding from $\mathcal T(S) \to \mathcal T(X)$ induced by $p$. 
\end{theorem}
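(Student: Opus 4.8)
The plan is to produce the extension explicitly and then verify that it is continuous, injective, and genuinely prolongs the interior embedding of Theorem~\ref{thm:folklore}. On the interior, $\widetilde p$ sends a marked hyperbolic structure $m$ on $S$ to its pullback $p^{*}m$ on $X$; by Theorem~\ref{thm:folklore} this is an isometric embedding for the Teichm\"uller metric, and its image is totally geodesic. To reach the Thurston boundary $\mathcal{PML}(S) = \partial \mathcal{T}(S)$, I would define the boundary map by pulling back laminations: a geodesic lamination $\lambda$ on $S$ has a well-defined full preimage $p^{-1}(\lambda)$, which is again a geodesic lamination on $X$ because $p$ is a local isometry, and the transverse measure pulls back. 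Since rescaling the measure on $\lambda$ rescales the pulled-back measure by the same factor, this descends to a map $\widehat p \colon \mathcal{PML}(S) \to \mathcal{PML}(X)$, $\widehat p([\lambda]) = [\,p^{-1}(\lambda)\,]$. The candidate extension is $\widetilde p$ on the interior and $\widehat p$ on the boundary.

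To see that this combined map is the continuous extension of $\widetilde p$, I would combine two ingredients. The first is the multiplicative compatibility of the cover with the geometric data that defines Thurston's compactification: for every simple closed curve $\gamma$ on $S$ and every $m \in \mathcal{T}(S)$ one has $\ell_{p^{*}m}(p^{-1}\gamma) = \deg(p)\,\ell_{m}(\gamma)$ and, for any measured lamination, $i(p^{-1}\lambda,\, p^{-1}\gamma) = \deg(p)\, i(\lambda,\gamma)$, so pullback intertwines the length and intersection functionals on the two sides. The second is Theorem~\ref{LM-thick}: the uniquely ergodic ending laminations are dense in $\mathcal{PML}(S)$, so it suffices to verify continuity at such points. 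For a uniquely ergodic $[\lambda]$, a Teichm\"uller geodesic ray with vertical foliation $\lambda$ converges to $[\lambda]$ in $\overline{\mathcal{T}(S)}$ by a theorem of Masur; because $\widetilde p$ is an isometric totally geodesic embedding, it carries this ray to a Teichm\"uller ray on $X$ whose vertical foliation is $p^{-1}(\lambda)$, and the multiplicativity identities then force every subsequential limit of the images $p^{*}m_{n}$ to equal $[\,p^{-1}(\lambda)\,] = \widehat p([\lambda])$.

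Injectivity on the interior is immediate from Theorem~\ref{thm:folklore}. On the boundary, injectivity of $\widehat p$ follows because the preimage is deck-invariant and satisfies $\lambda = p(p^{-1}(\lambda))$, so $\lambda$ is recovered from $p^{-1}(\lambda)$ and projectively distinct laminations have projectively distinct images; the length asymptotics of the previous step keep interior and boundary images from colliding. The main obstacle I anticipate is precisely the continuity step: transferring convergence across the cover requires controlling the projective length vector of $p^{*}m_{n}$ against \emph{all} simple closed curves on $X$, not merely against preimages of curves from $S$, and — if one routes through Teichm\"uller rays — it requires knowing that the image ray still converges to $[\,p^{-1}(\lambda)\,]$ even though $p^{-1}(\lambda)$ need not be minimal or uniquely ergodic. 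Showing that pullback preserves convergence of a Teichm\"uller ray to its defining foliation, in spite of the possible failure of minimality of $p^{-1}(\lambda)$, is the delicate heart of the argument, and is where the rigidity of the totally geodesic isometric image supplied by Theorem~\ref{thm:folklore} must be brought to bear.
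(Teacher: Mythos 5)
First, note that the paper does not prove this statement at all: it is quoted as an external result of Biswas--Mj--Nag \cite{Mj}, so your proposal must be judged on its own merits rather than against an internal argument.

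Your outline correctly identifies the crux, but it does not close it, and one of its reductions is invalid. You assert that, since uniquely ergodic ending laminations are dense in $\mathcal{PML}(S)$ (via Theorem~\ref{LM-thick}), ``it suffices to verify continuity at such points.'' That is false as a matter of topology: continuity at each point of a dense subset of the boundary does not imply continuity at the remaining boundary points. Density arguments prove \emph{uniqueness} of a continuous extension (two continuous maps agreeing on a dense set agree everywhere), never \emph{existence}. The non-uniquely-ergodic and non-filling classes in $\mathcal{PML}(S)$ --- exactly the points your argument skips --- are where continuity is most delicate, and your ray argument cannot reach them even in principle, since Teichm\"uller rays need not converge in $\overline{\mathcal{T}(S)}$ toward such points (Theorem~\ref{MasurBoundary} is only an ``almost every direction'' statement). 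Moreover, even at a uniquely ergodic $[\lambda]$, continuity of the extension requires handling an \emph{arbitrary} sequence $m_n \to [\lambda]$ in $\mathcal{T}(S)$, not only points marching along one Teichm\"uller ray with vertical foliation $\lambda$. Finally, you concede that the central estimate --- controlling $\ell_{p^{*}m_n}(\delta)$ for \emph{all} simple closed curves $\delta \subset X$, not merely for preimages of curves on $S$ --- is unresolved; saying that the rigidity of the totally geodesic image ``must be brought to bear'' is a statement of hope, not a proof. So the proposal is an accurate diagnosis of the difficulty accompanied by an unproved claim that it can be overcome.

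For completeness: the gap can be closed cleanly with Bonahon's geodesic currents, with no unique ergodicity, no density, and no rays. Convergence $m_n \to [\lambda]$ in $\overline{\mathcal{T}(S)}$ is equivalent to projective convergence of the Liouville currents $L_{m_n}$ to $\lambda$ in the space of geodesic currents, where the intersection form $i(\cdot,\cdot)$ is continuous and satisfies $i(L_m,\alpha)=\ell_m(\alpha)$ for every closed curve $\alpha$, simple or not. Then for any simple closed curve $\delta \subset X$ one has $\ell_{p^{*}m_n}(\delta) = \ell_{m_n}(p\circ\delta) = i(L_{m_n}, p\circ\delta)$, and after rescaling this converges to $i(\lambda, p\circ\delta) = i(p^{-1}\lambda,\delta)$; hence $\widetilde{p}(m_n) \to [\,p^{-1}\lambda\,]$ at \emph{every} boundary point simultaneously. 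The appearance of the possibly non-simple curve $p\circ\delta$ is precisely why one must leave the simple-curve length functionals defining Thurston's compactification and pass to currents --- which is the ingredient your sketch is missing.
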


A consequence of Theorem \ref{thm:Mj} is that $\widetilde p$ is defined on $\partial \mathcal{T}(S)$ as follows: given a measured geodesic lamination $\lambda$ on $S$ its image under $\widetilde p$ is simply the measured geodesic lamination $p^{-1}(\lambda)$ on $X$ obtained as the inverse image of $\lambda$ under $p$.

There is a Finsler metric on $\mathcal{T}(S)$, called the \textit{Teichm\"uller metric}, in which the distance between two points is the logarithm of the infimal dilatation of quasiconformal homeomorphisms from one marked surface to the other, taken over all such homeomorphisms isotopic to the identity. A result of Masur demonstrates the abundance of geodesic rays in the Teichm\"uller metric with endpoints on $\partial \mathcal{T}(S)$.

\begin{theorem}[Masur, \cite{Masur1982}] \label{MasurBoundary} At every point $x \in \mathcal T(S)$ and in almost every direction, a Teichm\"uller geodesic ray based at $x$ has a limit on the Thurston boundary of $\mathcal T(S)$. 
\end{theorem}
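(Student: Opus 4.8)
The plan is to reduce Masur's theorem to two ingredients: a genericity statement, that almost every direction has uniquely ergodic vertical foliation, and a deterministic convergence criterion, that whenever the vertical foliation of the defining quadratic differential is uniquely ergodic the associated geodesic ray converges on the Thurston boundary. First I would set up the correspondence between directions and quadratic differentials. Fixing $x \in \mathcal{T}(S)$ with underlying Riemann surface $X_0$, the unit sphere in the cotangent space to $\mathcal{T}(S)$ at $x$ with respect to the Teichm\"uller cometric is canonically the sphere $Q^1(X_0)$ of unit-norm holomorphic quadratic differentials on $X_0$, and each $q \in Q^1(X_0)$ determines the Teichm\"uller geodesic ray $r_q(t) = g_t \cdot q$ obtained by the Teichm\"uller map that expands the horizontal direction of $q$ and contracts the vertical. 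By the Hubbard--Masur theorem the assignment $q \mapsto [\mathcal F_v(q)]$ sending $q$ to the projective class of its vertical foliation is a homeomorphism from $Q^1(X_0)$ onto $\mathcal{PML}(S)$ (identifying measured foliations with measured laminations), and it carries the smooth visual measure on the sphere of directions to a measure in the Lebesgue class on $\mathcal{PML}(S)$. Thus ``almost every direction'' is interpreted with respect to this measure.

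With this dictionary in place, the first ingredient is the Masur--Veech theorem: for almost every $q \in Q^1(X_0)$ the vertical foliation $\mathcal F_v(q)$ is uniquely ergodic. This is a consequence of the ergodicity of the Teichm\"uller geodesic flow with respect to the Masur--Veech measure, and is in the same circle of ideas as the dense-orbit dichotomy of Lindenstrauss--Mirzakhani (Theorem~\ref{LM-thick}). Granting the convergence criterion below, the theorem then follows immediately: for almost every direction the vertical foliation is uniquely ergodic, hence the ray $r_q$ converges on the Thurston boundary.

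It remains to establish the convergence criterion, which I expect to be the main obstacle. For this I would estimate the lengths of simple closed curves along the ray. In natural coordinates $z = x + iy$ for $q$, the time-$t$ flat metric is $e^{2t}\,dx^2 + e^{-2t}\,dy^2$, which yields, for every simple closed curve $\gamma$, the clean two-sided bound
\[ \max\big(e^{t} i(\gamma,\mathcal F_v),\ e^{-t} i(\gamma,\mathcal F_h)\big) \le \ell_{q_t}(\gamma) \le e^{t} i(\gamma,\mathcal F_v) + e^{-t} i(\gamma,\mathcal F_h), \]
so that $e^{-t}\ell_{q_t}(\gamma) \to i(\gamma,\mathcal F_v)$ with no hidden constants; equivalently, the normalized extremal-length functional converges projectively to $i(\cdot,\mathcal F_v)$. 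The delicate point is that the Thurston compactification is defined via hyperbolic length, and the extremal-length (Gardiner--Masur) and hyperbolic-length (Thurston) boundaries need not coincide, so one must transfer this convergence from flat to hyperbolic length. This is precisely where unique ergodicity is indispensable: by compactness of $\mathcal{PML}(S)$ the normalized hyperbolic-length functionals subconverge, and unique ergodicity forces every subsequential limit to be the single measured lamination $i(\cdot,\mathcal F_v)$, ruling out the oscillation --- possible when $\mathcal F_v$ supports several ergodic transverse measures --- whereby excursions of $r_q$ deep into the thin part of Teichm\"uller space drive the normalized hyperbolic lengths toward distinct convex combinations of those ergodic measures. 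Controlling these thin-part excursions, via the collar lemma and the uniquely ergodic structure of $\mathcal F_v$, so as to pin down the hyperbolic limit is the technical heart of the argument; once it is in place one obtains $r_q(t) \to [\mathcal F_v(q)]$ in $\overline{\mathcal{T}(S)}$, and the proof is complete.
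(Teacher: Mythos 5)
This theorem is quoted by the paper from Masur's 1982 article ``Two boundaries of Teichm\"uller space'' and is not proved in the paper at all, so your proposal can only be compared with Masur's published argument. Your overall architecture matches his: identify the sphere of directions at $x$ with the unit sphere of quadratic differentials and, via Hubbard--Masur, with $\mathcal{PML}(S)$; invoke genericity of unique ergodicity (Masur--Veech) for the vertical foliation; and reduce to the deterministic criterion that a ray whose vertical foliation is uniquely ergodic converges in the Thurston boundary to $[\mathcal F_v]$. That decomposition is exactly right, and your measure-theoretic bookkeeping (visual measure versus the Lebesgue class on $\mathcal{PML}(S)$) is fine.

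The genuine gap is the one you flag yourself: the convergence criterion is asserted rather than proved, and the mechanism you propose for closing it --- controlling thin-part excursions via the collar lemma --- is not how the argument is actually closed, and would be hard to execute as stated. The standard (and Masur's) finish is softer. Since $d(x, r_q(t)) = t$, the ray is proper, so all subsequential limits lie in $\mathcal{PML}(S)$. Along the ray the extremal length of the vertical foliation decays exactly, $\mathrm{Ext}_{r_q(t)}(\mathcal F_v) = e^{-2t}\mathrm{Ext}_{x}(\mathcal F_v)$, and since hyperbolic length squared is bounded above by a constant times extremal length, $\ell_{r_q(t)}(\mathcal F_v) \to 0$. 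By the defining property of the Thurston compactification, if $r_q(t_n) \to [\mu]$ then $\ell_{r_q(t_n)}(\lambda) \to \infty$ for every $\lambda$ with $i(\mu,\lambda) \neq 0$; hence every subsequential limit satisfies $i(\mu, \mathcal F_v) = 0$. To conclude $[\mu] = [\mathcal F_v]$ you need, in addition to unique ergodicity of the transverse measure, that $\mathcal F_v$ is arational (minimal and filling) --- unique ergodicity alone fails to pin down $[\mu]$ when $\mathcal F_v$ has closed leaves --- but arationality also holds for almost every direction (directions with saddle connections are countable), so this is harmless once stated. Compactness of $\mathcal{PML}(S)$ then gives convergence, with no thin-part analysis needed. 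Two smaller points: your ``clean two-sided bound'' on flat lengths is essentially correct (the flat geodesic representative realizes both intersection numbers simultaneously), but it plays no role in the finish, since the flat-to-hyperbolic transfer is handled by the single inequality $\ell^2 \lesssim \mathrm{Ext}$ applied to $\mathcal F_v$ rather than by curve-by-curve normalization; and the genericity input is Masur's Annals 1982 theorem (independently Veech), not the Lindenstrauss--Mirzakhani result quoted in the paper, which concerns orbit closures of the mapping class group action on $\mathcal{ML}(S)$ and does not by itself yield unique ergodicity almost everywhere.
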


\subsection{The Complex of Curves}

Given an orientable surface $S$ with negative Euler characteristic, the \textit{curve complex}, $\mathcal C(S)$, is a flag simplicial complex whose vertices correspond to isotopy classes of essential simple closed curves and whose edges represent pairs of such classes that can be realized disjointly on $S$. By identifying each simplex with a standard simplex with unit length edges in the appropriate Euclidean space, $\mathcal{C}(S)$ becomes a metric space. 

A germinal result of Masur--Minsky \cite{MasurMinskyHyperbolic} states that the curve complex is $\delta$-hyperbolic. It follows that $\mathcal{C}(S)$ admits a Gromov boundary, which was characterized by Klarreich, as follows. Define $\mbox{sys}: \mathcal{T}(S) \rightarrow \mathcal{C}(S)$ by sending $X \in \mathcal{T}(S)$ to its \textit{systole}, the simple closed curve admitting the shortest geodesic representative on $X$. 

\begin{remark} \label{coarse} Note that $\mbox{sys}$ is technically not well-defined since the systole need not be unique. However, if two simple closed geodesics are simultaneously shortest, they can intersect at most once. It follows that the set of systoles for any $X \in \mathcal{T}(S)$ represents a subset of $\mathcal{C}(S)$ with diameter at most $2$. Thus, $\mbox{sys}$ is said to be \emph{coarsely well-defined}.
\end{remark}

Klarreich shows that $\mbox{sys}$ induces a map between $\mathcal{PMEL}(S)$ and $\partial_{\infty}\mathcal{C}(S)$ which allows for a characterization of the latter in terms of the former:

\begin{theorem}[Klarreich, \cite{Klarreich}]
\label{thm:ending-laminations}
The map $\mbox{sys}$ extends to a continuous map 
\[ \mbox{sys}_{\ast}: \mathcal{PMEL}(S) \rightarrow \partial_{\infty}\mathcal{C}(S), \]
factoring through the quotient map between $\mathcal{PMEL}(S)$ and $\mathcal{EL}(S)$, and inducing a homeomorphism between $\mathcal{EL}(S)$ and $\partial_{\infty}\mathcal{C}(S)$. 

\end{theorem}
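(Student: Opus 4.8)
The plan is to realize $\mathrm{sys}_\ast$ through Teichm\"uller geodesics and to transport the boundary structure of $\mathcal T(S)$ onto that of $\mathcal C(S)$. Beyond the $\delta$-hyperbolicity of $\mathcal C(S)$ from \cite{MasurMinskyHyperbolic}, the decisive external input is the Masur--Minsky theorem that $\mathrm{sys}$ sends every Teichm\"uller geodesic to an unparametrized quasigeodesic in $\mathcal C(S)$ with constants depending only on $S$; combined with stability of quasigeodesics in a $\delta$-hyperbolic space, this is what lets Teichm\"uller rays determine genuine points of $\partial_\infty \mathcal C(S)$. To define the map, I fix $\lambda \in \mathcal{PMEL}(S)$, choose any $X \in \mathcal T(S)$, and let $r_\lambda$ be the Teichm\"uller ray from $X$ whose vertical measured foliation represents $\lambda$ (Theorem~\ref{MasurBoundary} guarantees such rays limit on the Thurston boundary). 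Its image $\mathrm{sys}\circ r_\lambda$ is an unparametrized quasigeodesic, which I first check is unbounded: since $\lambda$ is filling, any simple closed curve meets it, so the systoles along $r_\lambda(t)$ must eventually leave every ball, a bounded image forcing a simple closed curve essentially disjoint from $\lambda$. An unbounded quasigeodesic ray in a $\delta$-hyperbolic space converges to a unique point of the Gromov boundary, and I set $\mathrm{sys}_\ast(\lambda)$ equal to that point; by construction $\mathrm{sys}$ converges to this point along $r_\lambda$, giving the required extension property.

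Next I would establish well-definedness and injectivity. The boundary point must be shown independent of the basepoint $X$ and, crucially, of the transverse measure on the support of $\lambda$, so that $\mathrm{sys}_\ast$ descends to $\mathcal{EL}(S)$. The mechanism is that two Teichm\"uller rays whose vertical foliations share the same minimal filling support project to quasigeodesics remaining within bounded Hausdorff distance in $\mathcal C(S)$: the systole at a given point is governed by the topological type of the short foliation and not its measure, so the two rays coarsely fellow-travel and share an endpoint. On the full-measure uniquely ergodic locus the measure is unique anyway, and one extends to all of $\mathcal{EL}(S)$ by density together with the continuity proved below. For injectivity, if $\lambda \neq \lambda'$ in $\mathcal{EL}(S)$ then these two minimal filling laminations intersect essentially, so any curve nearly carried by $\lambda$ meets any curve nearly carried by $\lambda'$ a definite amount; the two quasigeodesic rays then diverge and the Gromov product of their endpoints is finite, giving $\mathrm{sys}_\ast(\lambda) \neq \mathrm{sys}_\ast(\lambda')$.

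Surjectivity is where Theorem~\ref{LM-thick} enters decisively. Given $\xi \in \partial_\infty \mathcal C(S)$, I pick simple closed curves $c_n \to \xi$ and regard them in the compact space $\mathcal{PML}(S)$, passing to a convergent subsequence $c_n \to \mu$ with support $\nu$. If $\nu$ contained a simple closed curve $\beta$, then $\beta$ would stay within bounded $\mathcal C(S)$-distance of cofinitely many $c_n$, contradicting $c_n \to \infty$; hence by Theorem~\ref{LM-thick} the support $\nu$ has a dense orbit, so it is minimal and filling, i.e. $\nu \in \mathcal{EL}(S)$. A short argument then shows that the $c_n$ coarsely track the quasigeodesic defining $\mathrm{sys}_\ast(\nu)$, whence $\mathrm{sys}_\ast(\nu) = \xi$.

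Finally I would prove $\mathrm{sys}_\ast$ continuous on $\mathcal{PMEL}(S)$ and that it descends to a continuous bijection $\mathcal{EL}(S) \to \partial_\infty \mathcal C(S)$: if $\lambda_n \to \lambda$ in the quotient topology, stability of quasigeodesics makes the defining rays converge and forces $\mathrm{sys}_\ast(\lambda_n) \to \mathrm{sys}_\ast(\lambda)$. Continuity of the inverse is extracted from the surjectivity construction, since $\xi_n \to \xi$ in $\partial_\infty \mathcal C(S)$ forces the associated limiting laminations to converge in the coarse Hausdorff topology, which coincides with the quotient topology on $\mathcal{EL}(S)$; this yields the homeomorphism of \cite{Klarreich}. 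The two steps I expect to be genuinely hard are the measure-independence of the second paragraph --- controlling the systole along rays with the same support but different transverse measures --- and the promotion, in the last two paragraphs, of coarse combinatorial convergence in $\partial_\infty \mathcal C(S)$ to Hausdorff convergence of laminations; both ultimately rest on the Masur--Minsky unparametrized-quasigeodesic theorem together with the hyperbolicity of $\mathcal C(S)$.
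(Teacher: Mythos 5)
You should first note a mismatch with the premise of the exercise: the paper does not prove Theorem~\ref{thm:ending-laminations} at all --- it is imported from Klarreich~\cite{Klarreich} as a black box --- so your outline can only be measured against Klarreich's published argument. At the level of strategy you do follow it (shadows of Teichm\"uller rays under $\mbox{sys}$, the Masur--Minsky unparametrized-quasigeodesic theorem, stability in the $\delta$-hyperbolic $\mathcal{C}(S)$). However, your surjectivity step contains a genuine error. Theorem~\ref{LM-thick} says the $\Mod(S)$-orbit of a measured lamination is dense in $\mathcal{ML}(S)$ if and only if its support contains no simple closed curve; it does \emph{not} say that such a support is minimal and filling. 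A minimal lamination filling only a proper subsurface of $S$ has no closed leaves, hence has dense orbit by Lindenstrauss--Mirzakhani, yet is not filling and does not lie in $\mathcal{EL}(S)$. So the inference ``no closed curve in $\nu$, hence dense orbit, so $\nu$ is minimal and filling'' is false as stated: fillingness of $\mathcal{PML}$-accumulation points of a sequence $c_n \to \xi \in \partial_\infty\mathcal{C}(S)$ is precisely one of the substantive assertions Klarreich proves, not a corollary of Theorem~\ref{LM-thick}. Relatedly, your sub-claim that a closed leaf $\beta \subset \nu$ would stay within bounded $\mathcal{C}(S)$-distance of cofinitely many $c_n$ does not follow from projective convergence: $c_n \to \mu$ in $\mathcal{PML}(S)$ only gives $t_n\, i(c_n,\beta) \to i(\mu,\beta) = 0$ with $t_n \to 0$, which places no bound on $i(c_n,\beta)$ and hence none on $d_{\mathcal{C}(S)}(c_n,\beta)$; one needs a Luo-type lemma or Klarreich's actual argument here, and even then the minimal non-filling case is exactly what your route cannot exclude.

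Beyond this, the proposal is a roadmap rather than a proof: the two steps you yourself flag as hard --- independence of the transverse measure, so that $\mbox{sys}_{\ast}$ descends to $\mathcal{EL}(S)$, and the promotion of convergence in $\partial_\infty\mathcal{C}(S)$ to coarse-Hausdorff convergence of laminations, which gives continuity of the inverse --- constitute the technical core of Klarreich's paper and are left entirely open; moreover, your plan to handle non-uniquely-ergodic $\lambda$ ``by density together with the continuity proved below'' is circular unless that continuity is established independently of the descent. Two smaller inaccuracies: Theorem~\ref{MasurBoundary} concerns almost every direction from a basepoint and does not supply a ray with prescribed vertical data (that is Hubbard--Masur); and for non-uniquely-ergodic $\lambda$ the ray need not converge to $[\lambda]$ in the Thurston boundary --- harmless for you, since you only use the $\mathcal{C}(S)$-shadow, but the citation is misapplied. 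Likewise, since $\mathcal{C}(S)$ is not locally finite, a bounded shadow does not immediately produce a curve essentially disjoint from $\lambda$; unboundedness for filling vertical data is itself part of the Masur--Minsky theorem you invoke.
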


We will need the following two important results regarding the coarse geometry of the curve complex. The first result gives a relation between the curve complex of a surface $S$ and the curve complex of a regular cover $\Sigma$ of $S$.

\begin{theorem}[Rafi--Schleimer, \cite{RafiSchleimer-covers}] \label{thm:RS-curvecomplex}
Let $P: \Sigma \to S$ be a covering map and $\Pi: \mathcal C(S) \to \mathcal C(\Sigma)$ be the covering relation where $b \in \mathcal C(S)$ is related to $\beta \in \mathcal C(\Sigma)$, if $P(\beta) = b$. The map $\Pi: \mathcal C(S) \to \mathcal C(\Sigma)$ is a $Q$-quasi-isometric embedding, with $Q$ depending only on the topology of $ S $ and the degree of $ P $. 
\end{theorem}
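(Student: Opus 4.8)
The plan is to verify separately the two inequalities defining a quasi-isometric embedding: that $\Pi$ is coarsely Lipschitz (the easy direction) and coarsely distance-nondecreasing (the main direction). First note that $\Pi$ is coarsely well defined, since for a curve $b \subset S$ the preimage $P^{-1}(b)$ is a multicurve on $\Sigma$, so its components are pairwise disjoint and the set $\Pi(b)$ of elevations has diameter at most $1$ in $\mathcal{C}(\Sigma)$. For the Lipschitz bound, if $a, b \subset S$ are disjoint then $P^{-1}(a)$ and $P^{-1}(b)$ are disjoint multicurves, so every elevation of $a$ is disjoint from every elevation of $b$ and $d_{\mathcal{C}(\Sigma)}(\Pi a, \Pi b) \leq 1$. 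Chaining this along a geodesic $a = x_0, x_1, \dots, x_m = b$ in $\mathcal{C}(S)$ gives $d_{\mathcal{C}(\Sigma)}(\Pi a, \Pi b) \leq d_{\mathcal{C}(S)}(a,b)$, so no constant is needed here.

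The content is the reverse inequality $d_{\mathcal{C}(S)}(a,b) \leq Q\, d_{\mathcal{C}(\Sigma)}(\Pi a, \Pi b) + Q$. The naive attempt --- take a geodesic $\tilde c_0, \dots, \tilde c_n$ in $\mathcal{C}(\Sigma)$ joining an elevation of $a$ to one of $b$ and push it down by $P$ --- fails, because the image $P(\tilde c_i)$ of a simple closed curve can have a number of self-intersections unbounded in terms of $\deg(P)$, and the images of the disjoint curves $\tilde c_i, \tilde c_{i+1}$ can meet arbitrarily often; long simple geodesics upstairs project to long, highly self-intersecting geodesics downstairs. Instead I would pass through Teichm\"uller space. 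Using the Masur--Minsky machinery \cite{MasurMinskyHyperbolic}, realize $d_{\mathcal{C}(S)}(a,b)$ via the $\operatorname{sys}$-shadow of a Teichm\"uller geodesic $G \subset \mathcal{T}(S)$ running between a point $X_a$ on which $a$ is extremely short and a point $X_b$ on which $b$ is extremely short; since $\operatorname{sys}$ sends Teichm\"uller geodesics to (unparametrized) quasigeodesics in $\mathcal{C}(S)$, this shadow is a quasigeodesic from $a$ to $b$. By Theorem~\ref{thm:folklore} the induced map $\widetilde{P}\colon \mathcal{T}(S)\to\mathcal{T}(\Sigma)$ is an isometric embedding, so $\widetilde{P}(G)$ is a Teichm\"uller geodesic in $\mathcal{T}(\Sigma)$, parametrized by the same $t$, whose $\operatorname{sys}$-shadow is again a quasigeodesic. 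The feature that makes the endpoints usable is that once $a$ is short enough to lie below the Margulis constant, the thin part of $\widetilde{P}(X_a)$ is exactly the preimage of the collar about $a$, forcing $\operatorname{sys}_\Sigma(\widetilde{P}(X_a))$ to be an elevation of $a$, i.e. an element of $\Pi a$; likewise for $b$. Thus the upstairs shadow joins $\Pi a$ to $\Pi b$, and being a quasigeodesic its endpoint distance $d_{\mathcal{C}(\Sigma)}(\Pi a, \Pi b)$ controls its progress.

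It then suffices to compare the two shadows of the common geodesic $G$, and this is where I expect the difficulty to concentrate: the goal is to show that progress of the downstairs shadow in $\mathcal{C}(S)$ forces proportional progress of the upstairs shadow in $\mathcal{C}(\Sigma)$, so that applying the comparison to the endpoints yields $d_{\mathcal{C}(\Sigma)}(\Pi a, \Pi b)\gtrsim d_{\mathcal{C}(S)}(a,b)$. The obstacle is that $\operatorname{sys}$ does \emph{not} commute with $P$ along the interior of $G$: away from the endpoints the shortest curve upstairs need not be an elevation of the shortest curve downstairs, so one cannot match the shadows vertex by vertex. Controlling this requires understanding how the covering relation interacts with subsurface projections --- one shows that whenever $G$ records a large subsurface projection in some $W \subseteq S$, the lifted geodesic records a comparably large projection in a component of $P^{-1}(W)$ --- and then invokes the bounded geodesic image theorem to convert projection estimates back into curve-complex distance. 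It is in quantifying these projection estimates uniformly that the constant $Q$ acquires its dependence on $\deg(P)$ and the topology of $S$, and this is the technical heart of the argument.
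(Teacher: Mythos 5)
First, a point of orientation: the paper does not prove this statement at all --- it is imported as a black box from Rafi--Schleimer \cite{RafiSchleimer-covers} --- so there is no internal proof to compare yours against, and I am evaluating your proposal on its own terms (and against the published argument, which is Teichm\"uller-theoretic and broadly of the shape you describe). Your easy direction is correct and complete: the preimage of a simple closed curve is a multicurve, so $\Pi$ is coarsely well defined, and disjointness downstairs gives disjointness upstairs, so $\Pi$ is $1$-Lipschitz. Your setup for the hard direction is also reasonable, and it is the right framework: pinch $a$ and $b$, run the Teichm\"uller geodesic $G$ between the pinched points, lift it via the isometric embedding $\widetilde P$, and note that at the endpoints the upstairs systole is forced to be an elevation (this last point does require choosing $X_a$ to be thick away from $a$, and you should also justify that $\widetilde P(G)$ is again a Teichm\"uller geodesic --- isometric embeddings need not carry geodesics to geodesics; here it works because the pullback of the defining quadratic differential generates the lifted ray and Teichm\"uller geodesics are unique).

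The genuine gap is that your argument stops exactly where the theorem lives: the final paragraph describes what would need to be shown rather than showing it, and the reduction you gesture at is circular as stated. The transfer claim ``a large subsurface projection of $(a,b)$ in $W \subseteq S$ forces a comparably large projection of $(\Pi a, \Pi b)$ in a component of $P^{-1}(W)$'' is, in the top-level case $W = S$, literally the inequality $d_{\mathcal C(\Sigma)}(\Pi a, \Pi b) \gtrsim d_{\mathcal C(S)}(a,b)$ you are trying to prove; making the scheme non-circular requires an induction on complexity (annuli handled by a direct relative-twisting computation) together with machinery --- hierarchy or distance-formula type arguments, or Rafi's analysis of short curves along Teichm\"uller geodesics --- to assemble the inductive estimates into a distance bound. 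The bounded geodesic image theorem cannot do that assembly by itself: it converts large subsurface projections into constraints on where geodesics travel, which is the wrong direction for producing a lower bound on $d_{\mathcal C(\Sigma)}(\Pi a, \Pi b)$. A secondary issue: your claim that the two shadows cannot be matched vertex by vertex is too pessimistic. For every $t$, the systole of $\widetilde P(G(t))$ and every elevation of the systole of $G(t)$ are curves of length at most $\deg(P)$ times the Bers constant on the \emph{same} hyperbolic surface $\widetilde P(G(t))$, hence have bounded intersection and are uniformly close in $\mathcal C(\Sigma)$; this pointwise coarse commutation of $\mbox{sys}$ with the covering is one of the genuinely useful lemmas here, and you discarded it. But be aware that even granting it, the desired endpoint inequality still does not follow formally --- one would still need a properness statement for $\Pi$, which is again the theorem --- so the missing mechanism identified above remains the essential defect.
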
 

Note that $\Pi$ is not well-defined since the pre-image of a simple closed curve may very well be a multi-curve, but it is coarsely well-defined.

The second result establishes the relationship between the topology of two surfaces based on the coarse geometry of their respective curve complexes.

\begin{theorem}[Rafi-Schleimer, \cite{RafiSchleimer-rigid}] \label{thm:RS-quasi}
Let $S$ and $\Sigma$ be orientable, connected, compact surfaces of genus at least $2$. A quasi-isometry $ \Psi \colon \mathcal C ( S ) \to \mathcal C ( \Sigma ) $ is induced by a homeomorphism $\psi: S \to \Sigma$.
\end{theorem}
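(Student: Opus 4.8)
The plan is to run the argument behind Theorem~\ref{thm:regular} with a single new ingredient that converts the bounded-self-intersection hypothesis into coarse, curve-complex data. First I would record the elementary reformulation of the hypothesis. A closed curve $\gamma \subset S$ admits an elevation to $X$ with at most $N$ self-intersections precisely when $\gamma = p(c)$ for some closed curve $c$ on $X$ with $i(c,c)\le N$: any closed curve $c$ on $X$ is an elevation of its projection $p(c)$, and conversely an elevation of $\gamma$ is by definition a closed curve on $X$ projecting to $\gamma$. Writing $\mathcal{C}_{\le N}(X)$ for the essential closed curves on $X$ with at most $N$ self-intersections (and $\mathcal{C}^0(X)$ for the simple ones, the vertices of $\mathcal{C}(X)$), the hypothesis becomes the equality of subsets of free homotopy classes on $S$,
\[ p\!\left(\mathcal{C}_{\le N}(X)\right) = q\!\left(\mathcal{C}_{\le M}(Y)\right). \]

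The new ingredient is a \emph{coarsening lemma}: there is a constant $D$, depending only on $N$ and the (fixed) topology of $X$, so that every $c \in \mathcal{C}_{\le N}(X)$ lies within distance $D$ in $\mathcal{C}(X)$ of a genuine simple closed curve (take a boundary curve of a regular neighborhood of $c$, or, when $c$ fills $X$, note that $\chi(X)$ is then bounded in terms of $N$ and argue directly). Consequently, at the coarse level $\mathcal{C}_{\le N}(X)$ is indistinguishable from $\mathcal{C}^0(X)$, and the role of the bounds $N$ and $M$ (in particular the possibility $N \ne M$) is only to enlarge the quasi-isometry constants that appear; the conclusion is insensitive to this. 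This is exactly what lets the method of Theorem~\ref{thm:regular}, whose hypothesis is the case $N = M = 0$, be transported to the present setting.

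With the reformulation and the coarsening lemma in hand, I would build a quasi-isometry $\Psi \colon \mathcal{C}(X) \to \mathcal{C}(Y)$, deferring its coarse well-definedness to the next step. Given a simple closed curve $c$ on $X$, its projection $\gamma = p(c)$ lies in $p(\mathcal{C}^0(X))$, which by the coarsened hypothesis agrees up to bounded error with $q(\mathcal{C}_{\le M}(Y))$; choosing $c' \in \mathcal{C}_{\le M}(Y)$ with $q(c')$ coarsely equal to $\gamma$ and coarsening $c'$ to a simple closed curve, set $\Psi(c) = c'$. That $\Psi$ is coarsely Lipschitz follows from two standard intersection estimates for covers: disjoint curves on $X$ have projections on $S$ whose geometric intersection is bounded by a multiple of $\deg p$, and elevations on $Y$ of curves on $S$ with bounded intersection again have intersection bounded by a multiple of $\deg q$; composing with the logarithmic bound relating intersection number to distance in the curve complex gives the claim. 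The symmetric construction (using $q(\mathcal{C}_{\le M}(Y)) \subseteq p(\mathcal{C}_{\le N}(X))$) produces a coarse inverse, so $\Psi$ is a quasi-isometry. Applying Rafi--Schleimer rigidity (Theorem~\ref{thm:RS-quasi}) yields a homeomorphism $\psi \colon X \to Y$, and arranging $\Psi$ to coarsely intertwine the covering relations $\Pi_p, \Pi_q$ of Theorem~\ref{thm:RS-curvecomplex} (equivalently, the nearest-point projections onto the quasiconvex embedded copies of $\mathcal{C}(S)$) forces $q \circ \psi$ to be homotopic to $p$, whence, using that both covers are regular, $p$ and $q$ are equivalent.

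The main obstacle is precisely the coarse well-definedness of $\Psi$. For non-simple $\gamma$ the simple elevations to $Y$ form a full $\mathrm{Deck}(q)$-orbit, and this orbit can have unbounded diameter in $\mathcal{C}(Y)$ as $i(\gamma,\gamma) \to \infty$, so the curve-by-curve matching is ambiguous by an amount that is not obviously bounded. Controlling this is where the boundary machinery enters: passing to Gromov boundaries via Theorem~\ref{thm:ending-laminations} and Theorem~\ref{thm:Mj}, the correspondence becomes canonical on the dense (by Theorem~\ref{LM-thick}) set of uniquely ergodic ending laminations, since such a lamination carries an essentially unique projective transverse measure and hence a deck-equivariant, choice-free description of its image under pullback. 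I would therefore first establish the boundary homeomorphism $\mathcal{EL}(X) \to \mathcal{EL}(Y)$ coming from the pullback description of $\widetilde{p}$ and $\widetilde{q}$ on uniquely ergodic laminations, then promote it back to the quasi-isometry $\Psi$ using quasiconvexity of the embedded copies of $\mathcal{C}(S)$ together with finiteness of the deck groups to bound the residual ambiguity. Keeping all constants uniform through this promotion, and verifying the projection-compatibility that upgrades $\psi$ to an equivalence of covers, is the crux of the argument.
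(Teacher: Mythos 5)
Your proposal does not prove the statement at hand. The statement is Theorem~\ref{thm:RS-quasi}, the Rafi--Schleimer rigidity theorem asserting that every quasi-isometry $\mathcal{C}(S) \to \mathcal{C}(\Sigma)$ is induced by a homeomorphism $S \to \Sigma$. This is an external result that the paper cites from \cite{RafiSchleimer-rigid} and does not prove; any genuine proof would have to engage with the structure of quasi-isometries of the curve complex itself (in Rafi--Schleimer's work this goes through the rigidity of the adjacency relation and ultimately Ivanov-type automorphism rigidity), none of which appears in your write-up. What you have written instead is a sketch of the paper's Theorem~\ref{thm:nonsimple} (the generalization of Theorem~\ref{thm:regular} to elevations with at most $N$, respectively $M$, self-intersections), i.e., a proof of a \emph{consumer} of the stated theorem rather than of the theorem.

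Worse, the attempt is circular with respect to its assigned target: midway through you write ``Applying Rafi--Schleimer rigidity (Theorem~\ref{thm:RS-quasi}) yields a homeomorphism $\psi \colon X \to Y$,'' thereby invoking the very statement you were asked to prove. So even granting every other step (your coarsening lemma, the boundedness of the $\mathrm{Deck}(q)$-orbit ambiguity, the projection compatibility), the argument establishes nothing about the statement in question. The correct response here would have been either to reproduce the rigidity argument of \cite{RafiSchleimer-rigid}, or to recognize that the statement is a quoted black box for which the paper offers no proof.
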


\section{Regular Covers and Simple Curves}

\label{section:regular}

Throughout this section, fix finite regular covers $p: X \to S$ and $q: Y \to S$ of $S$. The only place in the proof of Theorem \ref{thm:regular} where we will invoke the regularity of $p$ and $q$ is in establishing the following lemma.

\begin{lemma} \label{lemma:qi-curve-cx}
Suppose that for any closed curve $\gamma \subset S$, there exists a simple elevation of $\gamma$ to $X$ if and only if there exists a simple elevation of $\gamma$ to $Y$. Then $ \mathcal{ C } ( X ) $ and $ \mathcal{ C } ( Y ) $ are quasi-isometric, and hence $ X $ and $ Y $ are homeomorphic.
\end{lemma}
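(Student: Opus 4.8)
The plan is to produce a quasi-isometry $\mathcal C(X)\to\mathcal C(Y)$ and then invoke the Rafi--Schleimer rigidity theorem (Theorem~\ref{thm:RS-quasi}) to upgrade it to a homeomorphism $X\to Y$; the quasi-isometry is the real content, since for homeomorphic surfaces the curve complexes are already isometric. The first and most conceptual step is to reformulate the hypothesis. Every simple closed curve $\beta$ on $X$ is tautologically a simple elevation of its projection $p(\beta)$, and conversely any simple elevation of a closed curve $\gamma\subset S$ is a simple closed curve on $X$ whose projection is a power of $\gamma$. Thus the set $\mathcal P_X$ of closed curves on $S$ admitting a simple elevation to $X$ records exactly the collection of projections $\{p(\beta):\beta\in\mathcal C(X)\}$, and the hypothesis says precisely that $\mathcal P_X=\mathcal P_Y$. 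This is the one place regularity is essential: for a regular cover the deck group permutes the elevations of $\gamma$ transitively by homeomorphisms, so a single simple elevation forces the entire preimage to be an embedded multicurve, which is what makes the covering relation $\Pi_X$ of Theorem~\ref{thm:RS-curvecomplex} interact cleanly with the set $\mathcal P_X$.

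Next I would pass to Gromov boundaries. By Theorem~\ref{thm:RS-curvecomplex} the covering relations are $Q$-quasi-isometric embeddings $\Pi_X\colon\mathcal C(S)\to\mathcal C(X)$ and $\Pi_Y\colon\mathcal C(S)\to\mathcal C(Y)$, and by the boundary functoriality recalled in Subsection~\ref{hyp} each induces a topological embedding $\widehat{\Pi_X}\colon\partial_\infty\mathcal C(S)\to\partial_\infty\mathcal C(X)$, and likewise for $Y$. Under Klarreich's identification $\partial_\infty\mathcal C(\cdot)\cong\mathcal{EL}(\cdot)$ (Theorem~\ref{thm:ending-laminations}), combined with Theorems~\ref{thm:folklore} and~\ref{thm:Mj}, these boundary maps are realized on ending laminations by $\lambda\mapsto p^{-1}(\lambda)$ and $\lambda\mapsto q^{-1}(\lambda)$. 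The goal is to assemble from $\mathcal P_X=\mathcal P_Y$ a homeomorphism $\Phi\colon\partial_\infty\mathcal C(X)\to\partial_\infty\mathcal C(Y)$: given $\xi\in\mathcal{EL}(X)$, represent it by simple closed curves $\beta_n\to\xi$, choose elevations $\beta_n'$ on $Y$ of the common projections $p(\beta_n)=q(\beta_n')\in\mathcal P_X$, and set $\Phi(\xi)=\lim\beta_n'$. To make this limit exist and be independent of the many choices, I would restrict to uniquely ergodic ending laminations, which are dense by Theorem~\ref{LM-thick}, realize the relevant boundary points as endpoints of Teichm\"uller geodesic rays via Masur's theorem (Theorem~\ref{MasurBoundary}), and track them through $\mathcal C(X)$ and $\mathcal C(Y)$ as unparametrized quasigeodesics, extending $\Phi$ by continuity off the uniquely ergodic locus.

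The main obstacle is precisely that the elevation correspondence is \emph{floppy}: the chosen $\beta_n'$ are only loosely constrained by their projection, and two disjoint curves on $X$ need not have disjoint chosen elevations on $Y$, so neither the existence of $\lim\beta_n'$ nor the uniform coarse-Lipschitz control required for a genuine quasi-isometry is automatic from soft coarse geometry alone. This is where the Teichm\"uller and lamination machinery must do the work, pinning down $\Phi$ on the dense uniquely ergodic locus and supplying enough metric control to conclude that $\mathcal{EL}(X)$ and $\mathcal{EL}(Y)$ match — forcing $X$ and $Y$ to share a genus and yielding an honest quasi-isometry $\mathcal C(X)\to\mathcal C(Y)$ rather than a mere boundary homeomorphism. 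Once such a quasi-isometry is in hand, Theorem~\ref{thm:RS-quasi} immediately provides the homeomorphism $X\to Y$, completing the proof. I expect the delicate reconciliation of the boundary-level correspondence with uniform coarse-metric estimates, and the verification that regularity makes the preimage multicurves behave coherently under $\Phi$, to be the crux of the argument.
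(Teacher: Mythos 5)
There is a genuine gap, and it sits exactly where you flag it yourself: you never actually produce the quasi-isometry $\mathcal{C}(X)\to\mathcal{C}(Y)$, and the route you sketch cannot produce one. The missing idea in your proposal is the common cover $W$ corresponding to $\pi_1(X)\cap\pi_1(Y)\subset\pi_1(S)$, which is what the paper uses to kill the ``floppiness'' you identify. Given a simple closed curve $\alpha_X\subset X$, the paper takes \emph{all} of its elevations to $W$ (a multicurve $A_W$) and then projects by $q'\colon W\to Y$ to get a set $A_Y$ of curves on $Y$; there is no arbitrary choice of elevation at any point. The hypothesis plus regularity of $q$ shows every curve in $A_Y$ is simple (your reading of where regularity enters is correct), and the diameter of $A_Y$ in $\mathcal{C}(Y)$ is uniformly bounded because every elevation of every curve of $A_Y$ back up to $W$ meets the multicurve $A_W$, giving diameter at most $3$ in $\mathcal{C}(W)$, and Theorem~\ref{thm:RS-curvecomplex} (applied to $W\to Y$) transfers this to a uniform bound in $\mathcal{C}(Y)$. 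This makes $\alpha_X\mapsto A_Y$ a coarsely well-defined, coarsely Lipschitz map; the symmetric construction gives a coarse inverse, and Theorem~\ref{thm:RS-quasi} finishes. The entire argument is combinatorial and happens inside the curve complexes.

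Your alternative — building a homeomorphism $\partial_\infty\mathcal{C}(X)\to\partial_\infty\mathcal{C}(Y)$ on the uniquely ergodic locus and then ``upgrading'' — fails for a structural reason: a homeomorphism of Gromov boundaries does not imply a quasi-isometry of the underlying hyperbolic spaces, and for non-proper spaces such as curve complexes there is no boundary-to-interior rigidity theorem to invoke. All of the cited machinery (Klarreich, Rafi--Schleimer, the boundary functoriality of Subsection~\ref{hyp}) runs in the opposite direction, from maps of spaces to maps of boundaries. So even granting the construction of your boundary map $\Phi$ (which itself needs the well-definedness you defer), the proposal ends with ``a mere boundary homeomorphism,'' which is not enough for Theorem~\ref{thm:RS-quasi}. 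A secondary confusion: the Teichm\"uller-theoretic apparatus (Theorems~\ref{thm:folklore}, \ref{thm:Mj}, \ref{MasurBoundary}, \ref{LM-thick}) is not used in this lemma at all; in the paper it appears only afterwards, in the proof of Theorem~\ref{thm:regular}, to upgrade the homeomorphism $X\cong Y$ to actual equivalence of the covers $p$ and $q$. Importing it here does not supply the coarse-metric control you need.
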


\begin{proof} Let $W$ be the regular cover of $X, Y,$ and $S$ that corresponds to $\pi_1(X) \cap \pi_1(Y) \subset \pi_1(S)$. This gives a diamond of regular covers of $S$, which is shown on the left of Figure \ref{fig:diamond}. In particular, we have $\pi = p\circ p' = q \circ q'$. We will use $W$ to define a quasi-isometry from $\mathcal C (X)$ to $\mathcal C (Y)$.

Let $\alpha_{X} \subset X$ be an essential simple closed curve and let $A_W$ be the multi-curve consisting of all elevations of $\alpha_{X}$ to $W$. Now consider the image $ A_{ Y } := q' ( A_W ) $ of $A_W$ on $Y$ under the regular cover $q': W \to Y$ shown in Figure \ref{fig:diamond}. 

We claim that $ A_{ Y } $ is a union of simple closed curves on $Y$. To see this, let $\alpha_Y = q'(\alpha_W)$ for some simple closed curve $\alpha_W \subset A_W$. Let $\alpha_S \subset S$ denote the projection of $\alpha_W$ to $S$ under the regular cover $\pi: W \to S$. By the assumptions,
\[
p(\alpha_{X}) = p \left( p'(\alpha_W ) \right) =\pi(\alpha_W) = \alpha_{ S } = \pi ( \alpha_{W } ) = q \left ( q' ( \alpha_{ W } ) \right ) = q ( \alpha_{Y } ) .
\]
Hence $ \alpha_{ X } $ and $ \alpha_{ Y } $ are elevations of $ \alpha_{ S } $. Since $ \alpha_{ X } $ is simple, our assumptions imply that $ \alpha_{ S } $ has some simple elevation to $ Y $. Since $ q \colon Y \to S $ is regular, this implies that all elevations of $ \alpha_{ S } $ to $ Y $ are simple, and so $ \alpha_{ Y } $ is simple.\footnote{Notice our use of the regularity of $q$.} It follows that $ A_{ Y } $ is a union of simple closed curves, though not necessarily pairwise disjoint ones. 

We next claim that the curves in $A_Y$ constitute a bounded diameter subset of $\mathcal C(Y)$, with the bound depending only on the covering maps $p,q$ and not on $\alpha_{X}$. Consider the elevations $\Gamma$ of all curves in $A_{ Y }$ to $W$, which contains $A_{ W } $. Note that $\Gamma$ is the union of all multicurves $\widetilde{\gamma}$ that arise as the set of elevations of some $\gamma \in A_{ Y } $. Moreover, $\widetilde{\gamma}\cap \alpha_W \neq \emptyset$ for any $\gamma \in A_{ Y } $. Using this fact we argue that $\Gamma$ has diameter at most $3$ in $\mathcal{C} (W)$: given any two distinct simple closed curves $\beta_1, \beta_2 \in \Gamma$, they each belong to some multicurve $\widetilde{\gamma_i}$ coming from the elevations of $\gamma_i \in A_{ Y }$, where $i=1, 2$. If $\widetilde{\gamma_1}= \widetilde{\gamma_2}$, then $\beta_1$ and $\beta_2$ are distance $1$ apart. Otherwise, there exists $\alpha_i\in \widetilde{\gamma_i}\cap \alpha_W$ for $i=1,2$ such that $\{\beta_1, \alpha_1, \alpha_2, \beta_2\}$ is a path of length at most $3$ in $\mathcal{C} (W)$. 

By Theorem \ref{thm:RS-curvecomplex}, $\Pi:\mathcal C(Y) \to \mathcal C(W)$ is a quasi-isometric embedding. Since $\Pi(A_Y) = \Gamma$, the curves in $A_{ Y }$ give a bounded diameter subset of $\mathcal C(Y)$. Thus the assignment $\alpha_{X} \mapsto A_{ Y } $ gives a coarsely well-defined map $\Psi: \mathcal C(X) \to \mathcal C(Y)$. In fact, $\Psi$ is coarsely Lipschitz by the same argument: given two disjoint simple closed curves on $X$, the union of their pre-images will be a multi-curve on $W$.

By symmetry, there exists a coarsely well-defined Lipschitz map $\Phi: \mathcal C(Y) \to \mathcal C(X)$. We now show that $\Phi$ is a coarse inverse to $\Psi$. It is clear that $\Phi(\Psi(\alpha_{ X })) = \Phi(A_{ Y })$. Letting $A_{YW}$ denote the full pre-image of $A_Y$ under $q'$, we have that $ \Phi(\Psi(\alpha_{ X }))  = p'(A_{ Y W } ) $.

Since $\Psi(\alpha_{ X })= A_{ Y } $ has bounded diameter in $\mathcal{C}(Y)$ (bounded independently of $\alpha_{ X }$) and $\Phi$ is coarsely Lipschitz, then $p'(A_{ Y W })$ has bounded diameter (also independent of $\alpha_{ X }$) in $\mathcal{C}(X)$, and it contains $\alpha_{ X }$.  It follows that $\Phi \circ \Psi$ is quasi-isometric to the identity on $\mathcal{C}(X)$, and a completely analogous argument proves the same for $\Psi \circ \Phi$ on $\mathcal{C}(Y)$.  Thus, $\mathcal C(X)$ and $\mathcal C(Y)$ are quasi-isometric and by Theorem \ref{thm:RS-quasi}, $X$ and $Y$ are homeomorphic. \end{proof}

Note that Theorem \ref{thm:RS-curvecomplex} gives us quasi-isometric embeddings of $\mathcal C(S)$ into both $\mathcal C(X)$ and $\mathcal C(Y)$. Thus, we have two different quasi-isometric embeddings of $\mathcal C(S)$ into $\mathcal C(Y)$. The first comes directly from Theorem \ref{thm:RS-curvecomplex} and we will call it $q^{*}: \mathcal C(S) \to \mathcal C(Y)$. The second comes from the composition of the quasi-isometry $\mathcal C(X) \to \mathcal C(Y)$ constructed in Lemma \ref{lemma:qi-curve-cx} with the quasi-isometric embedding $\mathcal C(S) \to \mathcal C(X)$  from Theorem \ref{thm:RS-curvecomplex}, and we will call it $g^{*}: \mathcal C(S) \to \mathcal C(Y)$.

\begin{lemma}\label{BoundaryCurveComplex} The quasi-isometric embeddings $q^{*}, g^{*}: \mathcal C(S) \to \mathcal C(Y)$ induce the same map on $\partial \mathcal C(S) \to \partial \mathcal C(Y)$.\end{lemma}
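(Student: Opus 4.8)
The plan is to show that $q^{*}$ and $g^{*}$ lie at bounded distance from one another as maps $\mathcal C(S) \to \mathcal C(Y)$, and then to invoke the general principle that two quasi-isometric embeddings of $\delta$-hyperbolic spaces which are uniformly close induce the same map on Gromov boundaries.

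First I would record that boundary principle. Suppose $f, g \colon Z \to Y$ are quasi-isometric embeddings of geodesic $\delta$-hyperbolic spaces with $\sup_{z} d_{Y}(f(z), g(z)) = D < \infty$, and fix a basepoint $y_{0} \in Y$. Given an admissible sequence $(x_{i})$ representing a point of $\partial_{\infty} Z$, the images $(f(x_{i}))$ and $(g(x_{i}))$ are admissible in $Y$ because $f$ and $g$ are quasi-isometric embeddings. A direct Gromov product computation then shows these two sequences are equivalent: since $d_{Y}(f(x_{i}), g(x_{i})) \le D$ while $d_{Y}(f(x_{i}), y_{0})$ and $d_{Y}(g(x_{i}), y_{0})$ both tend to infinity, we get $\lim_{i} (f(x_{i}), g(x_{i}))_{y_{0}} = \infty$. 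Hence $(f(x_{i}))$ and $(g(x_{i}))$ represent the same point of $\partial_{\infty} Y$, so $f$ and $g$ induce the same boundary map.

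The crux is then to show that $q^{*}$ and $g^{*}$ send each vertex to coarsely the same bounded-diameter subset of $\mathcal C(Y)$, using commutativity of the diamond $\pi = p \circ p' = q \circ q'$. For an essential simple closed curve $b \subset S$, the covering relation of Theorem \ref{thm:RS-curvecomplex} gives $q^{*}(b) = q^{-1}(b)$. On the other hand $g^{*}(b) = \Psi(p^{-1}(b))$, and since $\Psi$ is coarsely Lipschitz it suffices to track $\Psi$ on the components of the multicurve $p^{-1}(b)$: by the construction in Lemma \ref{lemma:qi-curve-cx}, each component $\alpha_{X}$ maps to $q'((p')^{-1}(\alpha_{X}))$. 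Taking the union over all components and using $(p')^{-1}(p^{-1}(b)) = \pi^{-1}(b)$ shows that $g^{*}(b)$ lies within bounded distance of $q'(\pi^{-1}(b))$. Now $\pi^{-1}(b) = (q')^{-1}(q^{-1}(b))$, and projecting a full preimage back down along $q'$ recovers the original multicurve, so $q'(\pi^{-1}(b)) = q^{-1}(b) = q^{*}(b)$. Every diameter bound here depends only on the degrees of the covers, via Theorem \ref{thm:RS-curvecomplex} and the uniform bound on $\Psi$ established in Lemma \ref{lemma:qi-curve-cx}, so $\sup_{b} d_{\mathcal C(Y)}(q^{*}(b), g^{*}(b)) < \infty$. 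Combining this with the first step yields that $q^{*}$ and $g^{*}$ induce the same boundary map.

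The main obstacle is bookkeeping rather than conceptual: the maps $\Psi$, the covering relations, and the projection $q'$ are only coarsely well-defined, since their outputs are bounded-diameter multicurves rather than single vertices. Care is therefore needed to confirm that applying the coarsely Lipschitz map $\Psi$ to the multicurve $p^{-1}(b)$ and then unioning the component images genuinely reproduces $q'(\pi^{-1}(b))$, and that the resulting error bound is uniform in $b$. Once this is verified, the exact set-level identity $q'(\pi^{-1}(b)) = q^{-1}(b)$ makes the agreement of $q^{*}$ and $g^{*}$ exact up to coarse error, which is all the boundary principle requires.
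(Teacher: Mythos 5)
Your proposal is correct and follows essentially the same route as the paper: both arguments reduce the lemma to showing $q^{*}$ and $g^{*}$ are at uniformly bounded distance, and both establish this via the commutativity of the covering diamond $\pi = p \circ p' = q \circ q'$, identifying the image of $g^{*}(b)$ with the multicurve of elevations $q^{-1}(b) \ni q^{*}(b)$ because $\Psi$ factors through $\mathcal C(W)$. The only cosmetic difference is that you spell out the Gromov-product argument for the boundary principle, which the paper simply cites from its preliminaries.
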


\begin{proof} It suffices to show that $q^{*}$ and $g^{*}$ are within bounded distance of each other; the lemma will then follow by the basic properties of hyperbolicity and Gromov boundaries outlined in Section \ref{hyp}. 

Let $\alpha \in \mathcal C(S)$ and let $A_{X}, A_Y,$ and $A_W$ be the collections of elevations of $\alpha$ to $X$, $Y$, and $W$, respectively (note that $A_{Y}$ is defined slightly differently here than in Lemma \ref{lemma:qi-curve-cx}). We will show that $q^{*}(\alpha)$ and $g^{*}(\alpha)$ are contained in $A_Y$. Recall from Theorem \ref{thm:RS-curvecomplex} that $q^{*}(\alpha) \in A_Y$ by definition. Let $p^{*}:\mathcal C(S) \to \mathcal C(X)$ be the map induced by $p: X \to S$ given by Theorem \ref{thm:RS-curvecomplex}. Then $g^{*}:\mathcal C(S) \to \mathcal C(Y) $ is the composition of $p^{*}$ with the map $\Psi: \mathcal C(X) \to \mathcal C(Y)$ from the proof of Lemma \ref{lemma:qi-curve-cx}. 

Note that the elevations of $A_{X}$ (resp. $A_Y$) to $W$ under the cover $W \to X$ (resp. $W \to Y$) is exactly $A_W$. Since $\Psi: \mathcal C(X) \to \mathcal C(Y)$ factors through $\mathcal C(W)$ in its definition, we have $\Psi(A_{X}) = A_{ Y }$. It follows that $g^{*}(\alpha) = \Psi(p^{*}(\alpha))\in \Psi(A_{X}) = A_Y$. Since $A_Y$ is a diameter one subset of $\mathcal C(Y)$, the claim follows immediately. \end{proof}

\begin{figure} 
\begin{tikzcd}[column sep={6em,between origins}, row sep=huge]
& W \arrow [dl, "p'", swap] \arrow[dr, "q'"] \arrow[dd, "\pi"] & \\
X \arrow[dr, "p", swap] & & Y \arrow[dl, "q"] \\ 
& S & 
\end{tikzcd} \qquad
\begin{tikzcd}[column sep={6em,between origins}, row sep=huge]
& \mathcal C(S) \arrow[dl, "p^{*}", swap] \arrow[dr, "q^{*}"] & \\
\mathcal C(X) \arrow[dr] \arrow[rr, "\Psi"] & & \mathcal C(Y) \arrow[dl] \\
& \mathcal C(W) &
\end{tikzcd}
\caption{}
\label{fig:diamond}
\end{figure}

With Lemmas \ref{lemma:qi-curve-cx} and \ref{BoundaryCurveComplex} in hand, we are now ready to prove Theorem \ref{thm:regular}, which we restate here for convenience.

\vspace{2 mm}
\noindent \textbf{Theorem \ref{thm:regular}. }\textit{If $p: X \to S$ and $q: Y \to S$ are two regular covers so that given any closed curve $\gamma \subset S$, there exists simple elevations of $\gamma$ to $X$ if and only if there exists simple elevations of $\gamma$ to $Y$, then $p$ and $q$ are equivalent covers.}
\vspace{2 mm}

Before beginning the proof, we will give a brief overview of our strategy. First we show that the map on $\partial \mathcal{C}(S)$ induced by the quasi-isometric embeddings $q^{*}, g^{*}$ commutes with the maps on $\partial \mathcal T(S)$ induced by the isometric embeddings $\tilde{q}, \tilde{g}$ coming from Theorem \ref{thm:folklore}, with respect to the natural projection from $\mathcal{PMEL}(S)$ to $\partial \mathcal{C}(S)$. We then show that $\widetilde q$ and $\widetilde g$ agree on $\partial \mathcal T(S)$, which allows us to argue that $\widetilde{q}, \widetilde g$ agree on $\mathcal T(S)$ setwise. Finally, we leverage the description of $\mathcal T(S)$ as the space of conjugacy classes of discrete, faithful representations $[\rho]: \pi_1(S) \to \text{PSL}_{2}(\mathbb R)$ to conclude that $p$ and $q$ are equivalent covers.

\begin{proof} By Lemma \ref{BoundaryCurveComplex}, the quasi-isometric embeddings $q^{*}, g^{*}: \mathcal C(S) \to \mathcal C(Y)$ defined above induce the same map from $\partial C(S) \to \partial C(Y)$. Thus, by Theorem \ref{thm:ending-laminations}, $q^{*}$ and $g^{*}$ induce the same map from the space of ending laminations of $S$ to those of $Y$. 

We have two covering maps from $Y$ to $S$: one is the regular covering map $q: Y \to S$; the other one comes from the composition of $p: X \to S$ with the homeomorphism between $X$ and $Y$ coming from Lemma \ref{lemma:qi-curve-cx} and Theorem \ref{thm:RS-quasi}, which we name as $g$. By Theorem \ref{thm:folklore}, the covering maps induce two isometric embeddings $\widetilde{q}, \widetilde{g}: \mathcal T(S) \to \mathcal T(Y)$ which by Theorem \ref{thm:Mj} admit natural extensions to the Thurston boundary.

Consider the diagram in Figure \ref{fig:diagram}, where $\pi_S$ is the natural projection (given by forgetting the measure) from $\mathcal{PMEL}(S)$ to $\mathcal{EL}(S)$; $\pi_{Y}$ is defined similarly.

\begin{figure}
\begin{tikzcd}[column sep=huge, row sep = huge]
\mathcal{PMEL}(S) \ar[ d , " \pi_{ S } " swap ] \ar[ r , " \widetilde{ q } " ] & \mathcal{PMEL} ( Y ) \ar[ d , " \pi_{ Y } " swap ] & \mathcal{PMEL} ( S ) \ar[ l , " \widetilde{ g } " swap ] \ar[ d , " \pi_{ S } " ] \\
\partial \mathcal{ C } ( S ) \ar[ r , " q^{ * } " swap ] & \partial \mathcal{ C } ( Y ) & \partial \mathcal{ C } ( S ) \ar[ l , " g^{ * } " ]
\end{tikzcd}
\caption{}
\label{fig:diagram}
\end{figure}

\begin{proposition} \label{commuting}
The diagram in Figure~\ref{fig:diagram} commutes.
\end{proposition}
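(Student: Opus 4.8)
The plan is to verify each square by chasing a single ending lamination and reducing the required identity to a statement about simple closed curves, where the two relevant maps literally agree. Consider the left square and fix $[\mu]\in\mathcal{PMEL}(S)$ with underlying ending lamination $\lambda$. Under the identification $\partial\mathcal C(S)\cong\mathcal{EL}(S)$ of Theorem~\ref{thm:ending-laminations} we have $\pi_S([\mu])=\lambda$, while $\widetilde q([\mu])=[q^{-1}(\mu)]$ is the preimage measured lamination on $Y$ (the description following Theorem~\ref{thm:Mj}), and its image under $\pi_Y$ is the point of $\partial\mathcal C(Y)$ determined by $q^{-1}(\lambda)$. Writing $\widehat{q^{*}}$ for the boundary extension of the quasi-isometric embedding $q^{*}$ supplied by Section~\ref{hyp}, commutativity of the left square is exactly the equality
\[
\widehat{q^{*}}(\lambda)=\pi_Y\big([q^{-1}(\mu)]\big)\qquad\text{in }\partial\mathcal C(Y).
\]

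To prove this I would approximate. Since weighted simple closed curves are dense in $\mathcal{PML}(S)$, choose essential simple closed curves $c_n\subset S$ with $c_n\to[\mu]$ in $\mathcal{PML}(S)$. Two continuity facts then combine. First, by Theorem~\ref{thm:ending-laminations} a sequence of weighted multicurves converging in $\mathcal{PML}$ to a point of $\mathcal{PMEL}$ has supports converging in the compactified curve complex to the image of that point under $\mathrm{sys}_{*}$; applied on $S$ this gives $c_n\to\lambda$ in $\overline{\mathcal C(S)}$, and continuity of the boundary extension (Section~\ref{hyp}) yields $q^{*}(c_n)\to\widehat{q^{*}}(\lambda)$. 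Second, the preimage operation $\mu\mapsto q^{-1}(\mu)$ is continuous on $\mathcal{ML}$ (this is the content of the boundary map in Theorem~\ref{thm:Mj}), so $q^{-1}(c_n)\to q^{-1}(\mu)$ in $\mathcal{PML}(Y)$, whence, applying Theorem~\ref{thm:ending-laminations} on $Y$, the supports $\mathrm{supp}(q^{-1}(c_n))$ converge in $\overline{\mathcal C(Y)}$ to $\pi_Y([q^{-1}(\mu)])$. The crucial point is that these two sequences of subsets of $\mathcal C(Y)$ coincide: by the definition of the Rafi--Schleimer covering relation (Theorem~\ref{thm:RS-curvecomplex}), $q^{*}(c_n)$ is precisely the set of elevations of $c_n$ to $Y$, i.e. the components of $q^{-1}(c_n)$, which is $\mathrm{supp}(q^{-1}(c_n))$. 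Since limits in the Gromov boundary are unique, we conclude $\widehat{q^{*}}(\lambda)=\pi_Y([q^{-1}(\mu)])$, so the left square commutes.

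For the right square I would run the identical argument with $q$ replaced by the covering map $g:Y\to S$, after observing that $g^{*}$ coarsely coincides with the covering relation of $g$. Indeed, writing $g=p\circ\psi^{-1}$ where $\psi:X\to Y$ is the homeomorphism inducing $\Psi$ (Theorem~\ref{thm:RS-quasi}), the elevations of a curve $c$ to $Y$ under $g$ are $g^{-1}(c)=\psi\big(p^{-1}(c)\big)$, which is coarsely $\Psi(p^{*}(c))=g^{*}(c)$. With this identification in place, approximating $\lambda$ by simple closed curves exactly as above gives $\widehat{g^{*}}(\lambda)=\pi_Y([g^{-1}(\mu)])=\pi_Y(\widetilde g([\mu]))$, so the right square commutes as well.

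The main obstacle is not the diagram chase but the careful handling of the two notions of convergence and of the target of $\widetilde q$. Concretely, I would need the precise multicurve form of Klarreich's convergence statement extracted from Theorem~\ref{thm:ending-laminations}, and I would need to know that $\widetilde q$ really maps $\mathcal{PMEL}(S)$ into $\mathcal{PMEL}(Y)$, i.e. that the preimage of an ending lamination again determines a point of $\partial\mathcal C(Y)$. Since the preimage of a minimal filling lamination is filling but may fail to be minimal, the cleanest route is likely to first establish the identity on the dense set of uniquely ergodic ending laminations---dense by Theorem~\ref{LM-thick}---and then extend to all of $\mathcal{PMEL}(S)$ using continuity of the four maps in the diagram.
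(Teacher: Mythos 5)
Your core argument is essentially the paper's: both proofs reduce commutativity of each square to the dense family of simple closed curves, where $q^{*}$ is literally the covering relation of Theorem~\ref{thm:RS-curvecomplex} (so $q^{*}(c)$ is the support of $q^{-1}(c)$), and then pass to the limit. The mechanics of the limit-identification differ. The paper takes a quasi-geodesic ray of simple closed curves $\alpha_i \to \lambda_{top}$, pushes forward to get $q^{*}(\alpha_i)\to z\in\partial\mathcal{C}(Y)$, and identifies $z$ by projecting back down ($q(q^{*}(\alpha_i))=\alpha_i$ forces $q(z)=\lambda_{top}$, hence $z\subseteq q^{-1}(\lambda_{top})$, with equality because both are minimal). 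You instead converge in $\mathcal{PML}(S)$, apply Klarreich's theorem on both $S$ and $Y$, and finish by uniqueness of limits in the Gromov boundary, avoiding the "project down and use minimality" step. Both routes are sound; note only that the multicurve convergence statement you invoke is stronger than the literal Theorem~\ref{thm:ending-laminations} quoted in the paper (it is in Klarreich's work), though the paper's own choice of a ray of curves converging to $\lambda_{top}$ "as laminations" leans on the same material. Your treatment of the right square via $g=p\circ\psi^{-1}$ and the observation that $g^{*}$ is coarsely the covering relation of $g$ is exactly what the paper's closing sentence compresses into "a similar argument."

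The obstacle you flag---that the preimage of a minimal filling lamination might fail to be minimal, so that $\widetilde{q}$ might not land in $\mathcal{PMEL}(Y)$---is genuine, but you should know two things about it. First, the paper needs precisely the same fact and simply asserts it (its last step declares $z$ and $q^{-1}(\lambda_{top})$ "both minimal," and the diagram itself presupposes $\widetilde{q}(\mathcal{PMEL}(S))\subseteq\mathcal{PMEL}(Y)$), so you are not behind the paper here. Second, your proposed patch does not actually repair it: restricting to uniquely ergodic $\mu$ does not make $q^{-1}(\mu)$ any more obviously minimal (unique ergodicity does not visibly transfer under finite covers), and the extension by continuity still requires $\pi_Y\circ\widetilde{q}$ to be defined on all of $\mathcal{PMEL}(S)$, which is the very fact in question. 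The fact is true and can be proved directly: $q^{-1}(\lambda)$ is filling, since an essential simple closed curve on $Y$ missing it would project to an essential closed curve missing the filling lamination $\lambda$; no leaf of $q^{-1}(\lambda)$ is isolated, since $q$ is a local homeomorphism and no leaf of the minimal, non-annular $\lambda$ is isolated; and a filling lamination without isolated leaves is minimal, because a proper minimal component would fill a proper subsurface whose essential boundary could be realized disjoint from the entire lamination. Alternatively, your own argument run in the reverse order gives the fact for free: since $\mathrm{supp}\,q^{-1}(c_n)=q^{*}(c_n)$ converges to the ending lamination $\widehat{q^{*}}(\lambda)\in\mathcal{EL}(Y)$ while $[q^{-1}(c_n)]\to[q^{-1}(\mu)]$ in $\mathcal{PML}(Y)$, the two-directional form of Klarreich's theorem forces $\mathrm{supp}\,q^{-1}(\mu)=\widehat{q^{*}}(\lambda)$, which is minimal and filling. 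Incorporating either remedy would make your proof complete, and indeed slightly more careful than the paper's.
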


\begin{proof} Let $[ \lambda] \in \mathcal{PMEL}(S)$ be a projective measured ending lamination with $\lambda$ as a representative. Consider the pullback $q^{-1}(\lambda)$ of $\lambda$ to $Y$ under the covering map $q: Y \to S$. Note that the projective measured lamination $[q^{-1}(\lambda)]$ is precisely the image of $[\lambda]$ under $\widetilde{q}$ by Theorem \ref{thm:Mj}. Subsequently, $\pi_{Y}(\tilde{q}([\lambda])) =\pi_{Y}([q^{-1}(\lambda)])$ is the underlying geodesic lamination $q^{-1}(\lambda)_{top}$ of $[q^{-1}(\lambda)]$ obtained by forgetting the measure.

On the other hand, $\pi_S([\lambda]) = \lambda_{top}$ is the underlying geodesic lamination of $[\lambda]$. Fix an arbitrary hyperbolic metric on $S$. Since $\lambda_{top}\in \mathcal{EL}(S) = \partial \mathcal{C}(S)$, we may choose a quasi-geodesic ray $\{\alpha_i\}_{i\in \mathbb{N}}$ of simple closed geodesics on $S$ such that $\{\alpha_i\}$ converge to $\lambda_{top}$ as laminations. Consider the sequence of simple closed multigeodesics $\{q^{*}(\alpha_i)\}$ in $Y$ equipped with the hyperbolic metric induced by $q: Y \rightarrow S$. Since $q^{*}$ is a quasi-isometry that extends continuously to the boundary of $\mathcal{C}(S)$, $\{q^{*}(\alpha_i)\}$ is a quasi-geodesic ray in $\mathcal{C}(Y)$ and therefore converges to some  (minimal) geodesic lamination $q^{*}(\lambda_{top}) = z \in \partial \mathcal{C}(Y)$. We claim that this limit is exactly $q^{-1}(\lambda_{top})$. 

To see this, note that the image of $q^{*}(\alpha_i)$ under $(q^{*})^{-1}$ is  $q(q^{*}(\alpha_i))= \alpha_i$. Since $\{\alpha_i\}$ converges to $\lambda_{top}$, by continuity, $q(q^{*}(\lambda_{top})) = q(z)$ must be $\lambda_{top}$. Hence $z$ is contained in the pre-image $q^{-1}(\lambda_{top})$. Since $z$ and $q^{-1}(\lambda_{top})$ are both minimal geodesic laminations on $Y$, they must be equal. Therefore $q^{*}(\pi_S([\lambda])) = q^{-1}(\lambda_{top})$.

Finally, observe that $q^{-1}(\lambda_{top})= q^{-1}(\lambda)_{top}$ since the lifting map $q^{-1}$ commutes with the measure-forgetting map. A similar argument holds for $\widetilde{g}$ and $g^{*}$. This concludes the proof of the proposition. \end{proof}

 It follows from Theorem \ref{LM-thick} that the set of uniquely ergodic ending measured laminations $\mathcal{EL}_{ue}$ are dense in the space of measured laminations $\mathcal{ML}$. By unique ergodicity, each $\lambda \in \mathcal{EL}_{ue}$ has a unique lift to $\mathcal{PML}$ under the natural projection $\pi: \partial \mathcal{T} \to \partial \mathcal{C}$. So the preimage $\pi_S^{-1}(\mathcal{EL}_{ue}(S))$ is dense in $\mathcal{PML}(S)$. Since $q^{*}$ and $g^{*}$ induce the same map from $\mathcal{EL}(S) \to \mathcal{EL}(Y)$ and the diagram in Figure~\ref{fig:diagram} commutes, $\widetilde{q}$ and $\widetilde{g}$ agree on $\pi_S^{-1}(\mathcal{EL}_{ue}(S))$ pointwise and hence on $\mathcal{PML}(S)$ pointwise by continuity. Therefore $\widetilde{q}|_{\partial \mathcal T(S)} =  \widetilde{g}|_{\partial \mathcal T(S)}$.

Next we will show that $ \widetilde{ q } $ and $ \widetilde{ g } $ have the same image in $ \mathcal{ T } ( Y ) $. Let $y \in \widetilde{q} (\mathcal T(S)) \subset \mathcal T(Y)$, and consider its preimage $x = \widetilde{q}^{-1}(y) \in {\mathcal T}(S)$. Then $x$ lies on a bi-infinite Teichm\"uller geodesic $\gamma_{x}$ which has endpoints in $\partial \mathcal T(S)$, by Theorem \ref{MasurBoundary}. By Theorems \ref{thm:folklore} and \ref{thm:Mj}, the images $\widetilde{q}(\gamma_x)$ and $\widetilde{g}(\gamma_x)$ are also bi-infinite Teichm\"uller geodesics with well-defined endpoints in $\partial \mathcal T(Y)$. Since $\widetilde{q}$ and $\widetilde{g}$ agree pointwise on $\partial \mathcal T(S)$ and any two distinct points in the boundary of Teichm\"uller space determine a unique Teichm\"uller geodesic, we have $\widetilde{q}(\gamma_x)= \widetilde{g}(\gamma_x)$. Therefore $y \in \widetilde{g}(\gamma_x) \subset \widetilde{g}(\mathcal T(S))$, which completes the proof that $\tilde{g}$ and $\tilde{q}$ have the same image. 

It follows that $\widetilde g^{-1} \circ \widetilde q$ is an isometry of $\mathcal T(S)$, fixing $\partial \mathcal{T}(S)$. This implies  that $\widetilde g^{-1} \circ \widetilde q$ is the identity on $\mathcal T(S)$ and thus that $\widetilde q$ and $\widetilde g$ agree pointwise on $\mathcal T(S)$.

Recall that $x \in \mathcal T(S)$ can be interpreted as a conjugacy class of a discrete, faithful representation $\rho: \pi_1(S) \to \text{PSL}_2(\mathbb R)$.  Consider $\widetilde{q}([\rho]), \widetilde{g}([\rho]) \in \mathcal T(Y)$, which (up to conjugacy) are the following: 
\[\widetilde q([\rho]): \pi_1(Y) \xrightarrow{q_*} \pi_1(S) \xrightarrow{\rho} \text{PSL}(2, \mathbb R),\] 
\[ \widetilde g([\rho]): \pi_1(Y) \xrightarrow{\varphi_*} \pi_1(X) \xrightarrow{p_*} \pi_1(S) \xrightarrow{\rho} \text{PSL}(2, \mathbb R).\]
Here $\varphi_*$ is the isomorphism between $\pi_1(X)$ and $\pi_1(Y)$ induced by the  homeomorphism between $X$ and $Y$ given by Lemma \ref{lemma:qi-curve-cx}. Since $\widetilde q$ and $\widetilde g$ agree on $\mathcal T(S)$, $\widetilde q([\rho]) = \widetilde g([\rho])$. This combined with the injectivity of $\rho$ imply that $q_*(\pi_1(Y)) = p_*(\pi_1(X))$. So $p$ and $q$ are equivalent covers. \end{proof}

\section{Regular Covers and Non-simple Curves} 

We now address Theorem \ref{thm:nonsimple}. Let $k = \max(N,M)$, and given $j \in \mathbb{N}$,  define $\Gamma_{k,j}(S)$ to be the graph whose vertices correspond to isotopy classes of closed curves $\gamma$ on $S$ with  self intersection number at most $k$, and so that two such vertices are connected by an edge exactly when the corresponding curves have geometric intersection at most $j$. Given a fixed $k$, there exists $r \in \mathbb{N}$ so that for all $j \geq r$, every vertex of $\Gamma_{k,j}(S)$ is adjacent to a simple closed curve (and thus in particular, the graph is connected since $\mathcal{C}(S)$ is connected). Choose $j$ so that 
\[ j \geq r \cdot [\pi_{1}(S): \pi_{1}(X)] \cdot [\pi_{1}(S) : \pi_{1}(Y)].\]

\begin{proposition} \label{prop:totallygeo} Let $\mathcal{C}_{j}(S)$ be the graph whose vertices are isotopy classes of simple closed curves and so that two curves are connected by an edge if they intersect at most $j$ times. Then $\mathcal{C}_{j}(S)$ is totally geodesic inside $\Gamma_{k,j}(S)$ and $\Gamma_{k,j}(S)$ lies in its one-neighborhood (and hence also in the one-neighborhood of $\mathcal{C}(S)$). 
\end{proposition}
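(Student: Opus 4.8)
The plan is to treat the two assertions separately, since they are of quite different character. The one-neighborhood statement is essentially a restatement of the defining property of $r$: because $j \geq r$, every vertex $\gamma$ of $\Gamma_{k,j}(S)$ is adjacent in $\Gamma_{k,j}(S)$ to some simple closed curve $\beta$, i.e.\ $i(\gamma,\beta) \leq j$. As $\beta$ has zero self-intersections it is a vertex of $\mathcal{C}_j(S)$, so $\gamma$ lies at distance at most $1$ from $\mathcal{C}_j(S)$; and since $\mathcal{C}_j(S)$ and $\mathcal{C}(S)$ have the same vertex set (all essential simple closed curves), the final parenthetical remark is automatic. I would also first record that $\mathcal{C}_j(S)$ is a \emph{full} subgraph of $\Gamma_{k,j}(S)$: every simple closed curve is a vertex of $\Gamma_{k,j}(S)$, and two simple closed curves span an edge of $\mathcal{C}_j(S)$ precisely when they span an edge of $\Gamma_{k,j}(S)$ (both conditions being $i \leq j$). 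Consequently $d_{\Gamma_{k,j}} \leq d_{\mathcal{C}_j}$ on vertices of $\mathcal{C}_j(S)$, so the content of the ``totally geodesic'' assertion is the reverse inequality $d_{\mathcal{C}_j} \leq d_{\Gamma_{k,j}}$, i.e.\ that the inclusion $\mathcal{C}_j(S) \hookrightarrow \Gamma_{k,j}(S)$ is an isometric embedding.

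To obtain the reverse inequality I would build a retraction $\rho \colon \Gamma_{k,j}(S) \to \mathcal{C}_j(S)$ that fixes $\mathcal{C}_j(S)$ pointwise and is $1$-Lipschitz; applying $\rho$ to a $\Gamma_{k,j}(S)$-geodesic between simple closed curves $u,v$ then produces a path in $\mathcal{C}_j(S)$ of no greater length, giving $d_{\mathcal{C}_j}(u,v) \leq d_{\Gamma_{k,j}}(u,v)$. The map $\rho$ is defined by resolution: set $\rho(\gamma)=\gamma$ when $\gamma$ is simple, and otherwise let $\rho(\gamma)$ be an essential simple closed curve obtained by \emph{iterated oriented smoothing} of the self-crossings of $\gamma$. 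Smoothing one self-crossing coherently splits the loop into its two natural sub-loops $A,B$, with $[\gamma] = [A][B]$ in $\pi_1$; hence if $\gamma$ is essential then at least one of $A,B$ is essential, and that sub-loop has strictly fewer self-crossings. Iterating and always retaining an essential component therefore terminates in an essential simple closed curve which, by construction, lies in an arbitrarily thin regular neighborhood of $\gamma$ and traverses each arc of $\gamma$ at most once.

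The crux is the $1$-Lipschitz estimate. Given adjacent vertices $\gamma,\gamma'$, realize them in minimal position so that $|\gamma \cap \gamma'| = i(\gamma,\gamma') \leq j$, with the self-crossings of each disjoint from the other curve. Choosing the neighborhoods defining $\rho(\gamma)$ and $\rho(\gamma')$ thin enough, every point of $\rho(\gamma) \cap \rho(\gamma')$ lies in a small disk about a point of $\gamma \cap \gamma'$, and each such disk contributes at most one crossing, since there $\rho(\gamma)$ runs parallel to a single arc of $\gamma$ and $\rho(\gamma')$ to a single arc of $\gamma'$. Thus $i(\rho(\gamma),\rho(\gamma')) \leq |\rho(\gamma) \cap \rho(\gamma')| \leq i(\gamma,\gamma') \leq j$, so $\rho(\gamma)$ and $\rho(\gamma')$ are equal or adjacent in $\mathcal{C}_j(S)$. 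Together with $\rho|_{\mathcal{C}_j} = \mathrm{id}$ this yields the isometric embedding (and, in particular, shows that some $\Gamma_{k,j}(S)$-geodesic between any two simple closed curves lies in $\mathcal{C}_j(S)$).

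I expect the main obstacle to be the \emph{construction} of $\rho$ rather than its use. One must produce, for each non-simple $\gamma$, a single essential simple closed curve that is simultaneously (i) essential and (ii) so tightly parallel to $\gamma$ that its intersection with \emph{every} competing curve is dominated by that curve's intersection with $\gamma$. The tension is real: an essential simple closed curve near $\gamma$ furnished merely by the defining property of $r$ need not run along $\gamma$, so its intersection number with a neighbor $\gamma'$ is uncontrolled. It is precisely the smoothing construction that secures both properties, and the delicate point is verifying that iterated oriented smoothing preserves essentiality while strictly decreasing the self-intersection number. A secondary care is the ``thin neighborhood plus minimal position'' bookkeeping behind $|\rho(\gamma)\cap\rho(\gamma')| \leq |\gamma\cap\gamma'|$, which I would make rigorous by passing to geodesic representatives in a fixed hyperbolic metric.
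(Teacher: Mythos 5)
Your proposal is correct, and its overall architecture is the same as the paper's: both proofs reduce the totally geodesic claim to constructing a $1$-Lipschitz retraction $\Gamma_{k,j}(S) \to \mathcal{C}_j(S)$ fixing $\mathcal{C}_j(S)$ pointwise, with the one-neighborhood claim read off from the defining property of $r$. Where you differ is in how the retraction is built. The paper's map $\tau$ is a one-step extraction: fix a minimal position representative of $v$, an orientation, and a starting point, and traverse $v$ until the first self-return; the resulting first-return sub-loop $v_s$ is automatically embedded, and it is essential \emph{because} $v$ is in minimal position (an embedded nullhomotopic sub-loop could be pushed off to reduce self-intersection). Your map $\rho$ instead iterates oriented smoothings, retaining an essential factor at each stage via the $\pi_1$ identity $[\gamma]=[A][B]$. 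Your construction buys robustness: essentiality of the output does not depend on the representative being in minimal position, and the inductive argument is self-contained. The paper's construction buys cleaner bookkeeping: $v_s$ is a literal sub-path of $v$, so the intersection bound $\lvert (v_1)_s \cap (v_2)_s\rvert \leq \lvert v_1 \cap v_2\rvert$ is immediate, with no thin-neighborhood perturbation analysis. Both proofs share the same mild gap-to-be-filled that you explicitly flag and the paper elides with ``we obviously also have'': the chosen representatives must be in minimal position \emph{pairwise} as well as individually for the counting to go through, which is most easily arranged by taking geodesic representatives in a fixed hyperbolic metric, exactly as you propose.
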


\begin{proof} The fact that $\Gamma_{k,j}(S)$ lies in the one-neighborhood of $\mathcal{C}(S)$ follows immediately from the choice of $j$. To see that $\mathcal{C}_{j}(S)$ is totally geodesic, consider the map 
\[ \tau: \Gamma_{k,j}(S) \rightarrow \mathcal{C}_{j}(S), \]
defined as follows. For each vertex $v \in \Gamma_{k,j}(S)$, choose a preferred minimal position representative in that isotopy class (which by abuse of notation we refer to as $v$), a choice of orientation, and a choice of starting point which we will denote by $i(v)$. Starting at $i(v)$, traverse $v$ in the forward direction until first reaching a point that has already been visited. This defines a sub-curve $v_{s}$ (which may not necessarily contain $i(v)$) which is necessarily simple and which is essential since $v$ was in minimal position. Note that we are using the fact that $S$ is closed here. Then define $\tau$ by $\tau(v)= v_{s}$. 

Note that $v_{1}, v_{2} \in \Gamma_{k,j}(S)$ are adjacent if and only if they intersect at most $j$ times. If this occurs, we obviously also have that $(v_{1})_{s}, (v_{2})_{s}$ intersect at most $j$ times and so it follows that $\tau$ is a $1$-Lipschitz retraction of $\Gamma_{k,j}(S)$ on to $\mathcal{C}_{j}(S)$.
\end{proof}

\begin{corollary} \label{cor:QI} The inclusion of $\mathcal{C}(S)$ into $\Gamma_{k,j}(S)$ is a quasi-isometry. 
\end{corollary}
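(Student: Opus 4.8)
The plan is to verify directly that the inclusion $\iota \colon \mathcal{C}(S) \to \Gamma_{k,j}(S)$ is a quasi-isometry, using the retraction $\tau$ and the one-neighborhood statement supplied by Proposition \ref{prop:totallygeo}, together with the standard fact that simple closed curves of bounded geometric intersection are a bounded distance apart in $\mathcal{C}(S)$. Two of the three required properties are immediate. Coarse surjectivity of $\iota$ is exactly the assertion in Proposition \ref{prop:totallygeo} that $\Gamma_{k,j}(S)$ lies in the one-neighborhood of $\mathcal{C}(S)$. And $\iota$ is $1$-Lipschitz: if two simple closed curves $\alpha,\beta$ are disjoint then $i(\alpha,\beta)=0\le j$, so they are adjacent in $\Gamma_{k,j}(S)$; hence $d_{\Gamma_{k,j}}(\alpha,\beta)\le d_{\mathcal{C}(S)}(\alpha,\beta)$ for all vertices $\alpha,\beta$.

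The substance is the reverse (quasi-isometric embedding) inequality, and here the retraction $\tau \colon \Gamma_{k,j}(S)\to \mathcal{C}_j(S)$ from Proposition \ref{prop:totallygeo} does the work. First I would note that $\tau$ restricts to the identity on $\mathcal{C}_j(S)$: if $v$ is simple, then traversing $v$ until the first repeated point recovers all of $v$, so $v_s=v$. Now fix simple closed curves $\alpha,\beta$ and a geodesic $\alpha=w_0,w_1,\dots,w_n=\beta$ in $\Gamma_{k,j}(S)$, where $n=d_{\Gamma_{k,j}}(\alpha,\beta)$. Applying the $1$-Lipschitz map $\tau$ produces a path $\alpha=\tau(w_0),\dots,\tau(w_n)=\beta$ in $\mathcal{C}_j(S)$ in which consecutive vertices are equal or adjacent; in either case the simple closed curves $\tau(w_i)$ and $\tau(w_{i+1})$ satisfy $i(\tau(w_i),\tau(w_{i+1}))\le j$.

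To convert this into a bound in $\mathcal{C}(S)$, I would invoke the standard surgery estimate that there is a constant $D=D(j)$ (one may take $D=2+2\log_2 j$) such that any two simple closed curves meeting at most $j$ times satisfy $d_{\mathcal{C}(S)}\le D$; see \cite{MasurMinskyHyperbolic}. Summing this estimate along the projected path gives
\[ d_{\mathcal{C}(S)}(\alpha,\beta) \le \sum_{i=0}^{n-1} d_{\mathcal{C}(S)}(\tau(w_i),\tau(w_{i+1})) \le D\,n = D\, d_{\Gamma_{k,j}}(\alpha,\beta). \]
Combining with the $1$-Lipschitz bound yields $\tfrac{1}{D}\,d_{\mathcal{C}(S)}(\alpha,\beta)\le d_{\Gamma_{k,j}}(\alpha,\beta)\le d_{\mathcal{C}(S)}(\alpha,\beta)$, so $\iota$ is a $(D,0)$-quasi-isometric embedding; together with coarse surjectivity this proves that $\iota$ is a quasi-isometry.

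The main obstacle is really the last ingredient, the comparison between $\mathcal{C}_j(S)$ and $\mathcal{C}(S)$ — that bounded geometric intersection forces bounded curve-complex distance. Everything else is soft, following formally from the $1$-Lipschitz retraction and one-neighborhood properties already established in Proposition \ref{prop:totallygeo}; but without the surgery estimate one cannot exclude the possibility that a long geodesic in $\Gamma_{k,j}(S)$ projects to a short $\mathcal{C}_j(S)$-path corresponding to large $\mathcal{C}(S)$-distance. Since this estimate is standard and $j$ is a fixed constant, the resulting quasi-isometry constant $D$ depends only on $j$ (hence only on $S$ and the covering degrees), which is all that is needed downstream.
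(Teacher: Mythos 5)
Your proof is correct and takes essentially the same route as the paper: both arguments rest on Proposition \ref{prop:totallygeo} (the retraction $\tau$ and the one-neighborhood statement) together with the standard surgery estimate that simple closed curves intersecting at most $j$ times are at distance at most $2\log_2(j)+2$ in $\mathcal{C}(S)$. The only difference is expository --- the paper reduces to the inclusion $\mathcal{C}(S)\hookrightarrow\mathcal{C}_j(S)$ by citing total geodesy, whereas you unpack that reduction by explicitly pushing a $\Gamma_{k,j}(S)$-geodesic through $\tau$ and summing the surgery bound along it.
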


\begin{proof} It suffices to show that $\mathcal{C}(S) \hookrightarrow \mathcal{C}_{j}(S)$ is a quasi-isometry. This follows from standard arguments; in particular, if $\alpha, \beta$ are simple closed curves intersecting at most $j$ times, their distance in $\mathcal{C}(S)$ can be at most $2\log_{2}(j)+ 2$.  
\end{proof}

\begin{proof}[Proof of Theorem~\ref{thm:nonsimple}]

We will begin by establishing a quasi-isometry $\Omega: \Gamma_{k,j}(X) \rightarrow \Gamma_{k,j}(Y)$ which is defined in precisely the same way as $\Phi$ in the proof of Theorem \ref{thm:regular}: given $\alpha \in \Gamma_{k,j}(X)$ a closed curve, first take its pre-image under the cover from $W$ to obtain the curve collection $A_{W}$ and then project to $Y$ under $q': W \rightarrow Y$.

This yields a collection of curves $\alpha_{1},..., \alpha_{n}$ on $Y$ satisfying $i(\alpha_{l}, \alpha_{l}) \leq M \leq k$, $ 1 \leq l \leq n$. The value of $j$ was chosen to ensure that $A_{W}$ has diameter $1$ in $\Gamma_{k,j}(W)$, and so it follows by the same argument used in the proof of Theorem \ref{thm:regular} that $\left\{\alpha_{1},..., \alpha_{n}\right\}$ has diameter bounded above independent of $\alpha$ in $\Gamma_{k,j}(Y)$. Just as in Theorem \ref{thm:regular}, this establishes the well-definedness of $\Omega$ and that it is coarsely Lipschitz; interchanging the roles of $Y$ and $X$ give that $\Omega$ is a quasi-isometry. 

Pre- and post-composing $\Omega$ with the quasi-isometries $\mathcal{C}(X) \hookrightarrow \Gamma_{k,j}(X)$ and $\Gamma_{k,j}(Y) \rightarrow \mathcal{C}(Y)$ from Corollary \ref{cor:QI} yields a quasi-isometry $\Phi: \mathcal{C}(X) \rightarrow \mathcal{C}(Y)$. Once $\Psi: \mathcal C(X) \to \mathcal C(Y)$ is replaced with $\Phi: \mathcal C(X) \to \mathcal C(Y)$ (just defined) in the proof of Lemma \ref{BoundaryCurveComplex}, then the remainder of the argument is identical to the proof of Theorem \ref{thm:regular}. 
\end{proof}

\section{Abelian Covers and Simple Curves}

\label{section:abelian}

Let $p: X \to S$ be a finite cover. A curve $\widetilde{\alpha} \subset X$ is an elevation of a curve $\alpha \subset S$ of degree $k$ if $\widetilde{\alpha}$ covers $\alpha$ with degree $k$.

We will consider the (integral) simple curve homology $\Hom(X) = \Hom(X; \mathbb Z)$, which is the subgroup of $H_{1}(X; \mathbb Z)$ generated by the (integral homology classes of) elevations of simple closed curves on $S$. Fixing basepoints $(X, x_0)$ and $(S, s_0)$ such that $p(x_0) = s_0$, we can find a generating set $G_p$ for $\Hom(X)$ in terms of the fundamental groups of $X$ and $S$.

Specifically, call an element of $\pi_1(S, s_0)$ simple if it is represented by a curve which is freely homotopic to a simple closed curve on $S$. We will denote the collection of all powers of simple elements by $P \subset \pi_1(S, s_0).$ Observe that \[G_p \coloneqq \{[\alpha]\in \text{H}_1(X;\mathbb Z) \mid \alpha \in \pi_1(X, x_0) \text{ and } p_*\alpha \in P\}\] is a generating set for $\Hom(X)$.

\begin{proposition}[Looijenga, \cite{Looijenga}] \label{thm:looijenga} $\Hom(X) = \text{H}_1(X)$ for a finite-degree abelian cover $p: X \to S$.\end{proposition}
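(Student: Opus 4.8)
The plan is to exploit the fact that, since the cover $p\colon X\to S$ is abelian, it is regular with finite abelian deck group $Q$ and is classified by a surjection $\phi\colon H_1(S;\mathbb Z)\twoheadrightarrow Q$. The deck group acts on $H_1(X;\mathbb Z)$, making it a $\mathbb Z[Q]$-module, and my first observation is that $\Hom(X)$ is a $\mathbb Z[Q]$-submodule: a deck transformation carries an elevation of a simple closed curve $\gamma$ to another elevation of $\gamma$, so it preserves the generating set $G_p$ (simplicity being a free-homotopy invariant). Thus the proposition becomes a statement about $\mathbb Z[Q]$-modules, and I would organize the argument around the pushforward $p_*\colon H_1(X;\mathbb Z)\to H_1(S;\mathbb Z)$, whose image is exactly $\Lambda':=\ker\phi$ because the cover factors through $H_1(S)$.

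First I would show that $p_*(\Hom(X))=\Lambda'$. Every primitive class $c\in H_1(S;\mathbb Z)$ is represented by a simple closed curve $\gamma$, and if $d=\operatorname{ord}\phi(c)$ then $\gamma$ has an elevation $\tilde\gamma$ of degree $d$ with $p_*[\tilde\gamma]=d\,c\in\Lambda'$. The remaining (purely lattice-theoretic) point is that the elements $\{\,(\operatorname{ord}\phi(c))\cdot c : c\in H_1(S;\mathbb Z)\text{ primitive}\,\}$ generate $\ker\phi$; this can be checked one prime at a time, using that $\operatorname{rank}H_1(S)=2g\ge 4$ guarantees a primitive representative in each relevant congruence class. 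Granting this, $H_1(X;\mathbb Z)=\Hom(X)+\ker p_*$, so the entire proposition reduces to the single containment $\ker p_*\subseteq \Hom(X)$.

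To understand $\ker p_*$ I would invoke the Cartan--Leray five-term exact sequence for the regular cover $X\to S=X/Q$,
\[ H_2(S;\mathbb Z)\to H_2(Q;\mathbb Z)\to H_1(X;\mathbb Z)_Q \xrightarrow{\,\overline{p_*}\,} H_1(S;\mathbb Z)\to H_1(Q;\mathbb Z)\to 0, \]
where $H_1(X;\mathbb Z)_Q=H_1(X;\mathbb Z)/I_Q H_1(X;\mathbb Z)$ are the coinvariants and $I_Q=\ker(\mathbb Z[Q]\to\mathbb Z)$ is the augmentation ideal. This exhibits $\ker p_*$ as an extension of a quotient of $H_2(Q;\mathbb Z)\cong\wedge^2 Q$ by the augmentation submodule $I_Q H_1(X;\mathbb Z)$, and I would treat these two layers separately. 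For the augmentation layer, note that $H_1(X;\mathbb Z)$ is generated as a $\mathbb Z[Q]$-module by elevations of closed (not necessarily simple) curves, so $I_Q H_1(X;\mathbb Z)$ is generated over $\mathbb Z$ by differences $[g\cdot\tilde c]-[\tilde c]$ of two elevations of a single closed curve $c$; the task is to rewrite each such difference in terms of elevations of simple closed curves, which I expect to be manageable by decomposing $c$ along a simple generating set and tracking how the deck group permutes the resulting elevations.

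The hard part will be the $\wedge^2 Q$ layer: the classes in $\ker p_*$ coming from $H_2(Q)$ are genuinely two-dimensional in origin and are invisible from any single curve or its elevations. I would realize them geometrically using configurations of disjoint simple closed curves: a pair $\gamma_1,\gamma_2$ of disjoint simple closed curves with $[\gamma_i]=c_i$ produces, among the elevations of $\gamma_1$ and $\gamma_2$ to $X$, a cycle whose image under the transgression is $\phi(c_1)\wedge\phi(c_2)\in\wedge^2 Q$, and I would show that letting $c_1,c_2$ range over disjointly realizable primitive classes sweeps out all of $\wedge^2 Q$ (using that $\phi$ is surjective and that generators of $Q$ are realized by disjoint simple curves). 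Assembling the two layers then gives $\ker p_*\subseteq\Hom(X)$ and hence $\Hom(X)=H_1(X)$. The main obstacle, and the place where the abelian (rather than merely regular) hypothesis is essential, is precisely this faithful realization of the Schur-multiplier classes $\wedge^2 Q$ by elevations of honestly simple, disjoint curves.
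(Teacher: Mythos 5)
First, a caveat on the comparison itself: the paper does not prove this proposition; it quotes it from Looijenga, so there is no internal proof to measure yours against, and your proposal must stand on its own. Its skeleton does stand: since every nonzero $x \in \ker\phi$ can be written $x = kc$ with $c$ primitive in $H_1(S;\mathbb Z)$, and $k\,\phi(c) = 0$ forces $\operatorname{ord}(\phi(c)) \mid k$, every element of $\ker\phi$ is an integer multiple of some $(\operatorname{ord}\phi(c))\,c$; so your lattice lemma is immediate, with no prime-by-prime analysis. (Beware, though, that the justification you sketch for it --- a primitive representative in each relevant coset --- is false in general: for the mod-$2$ homology cover, $\ker\phi = 2H_1(S;\mathbb Z)$ contains no primitive vector at all. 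The lemma survives because it never needed that claim.) Likewise the five-term sequence correctly presents $\ker p_*$ as an extension of a quotient of $\wedge^2 Q$ by $I_Q H_1(X)$, so the proposition does reduce to your two layers.

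Both layers, however, are genuine gaps, and together they contain all the content of the theorem. For the augmentation layer you offer only the expectation that rewriting $[g\cdot\tilde c] - [\tilde c]$ in terms of simple elevations is ``manageable''; it is not a routine verification. Work in $H_1(X,F)$, $F = p^{-1}(s_0)$, where a based loop $c$ has a path-lift class $\widetilde{[c]}$ with $\widetilde{[c_1c_2]} = \widetilde{[c_1]} + \phi(c_1)\widetilde{[c_2]}$, and the closed elevations of a simple loop $s$ span exactly $\mathbb Z[Q]\,N_{\phi(s)}\widetilde{[s]}$, where $N_t = 1 + t + \cdots + t^{\operatorname{ord}(t)-1}$. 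Writing $c = s_1\cdots s_k$ with each $s_i$ simple, the class you must capture is $(g-1)N_{\phi(c)}\sum_i \phi(s_1\cdots s_{i-1})\widetilde{[s_i]}$, and $(g-1)N_{\phi(c)}$ does not lie in $\mathbb Z[Q]\,N_{\phi(s_i)}$ in general (the norm elements of $\phi(c)$ and of $\phi(s_i)$ are unrelated), so no term-by-term rewriting along your ``simple generating set'' can succeed; one needs cancellation across the whole sum, i.e.\ an actual new idea. The $\wedge^2 Q$ layer is worse: the mechanism you propose is wrong. The image of $u \wedge v$ in $H_1(X)_Q$ is the class of a commutator $[g,h]$ of based lifts, and a direct computation gives $\widetilde{[\,[g,h]\,]} = (1-v)\widetilde{[g]} - (1-u)\widetilde{[h]}$ with $u = \phi(g)$, $v = \phi(h)$; this is in general not contained in $\mathbb Z[Q]\,N_u\widetilde{[g]} + \mathbb Z[Q]\,N_v\widetilde{[h]}$, i.e.\ the elevations of two \emph{disjoint} curves $\gamma_1,\gamma_2$ simply do not produce this class, and $[g,h]$ is then in general not even simple. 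The configuration that genuinely works is $i(\gamma_1,\gamma_2) = 1$: then $[g,h]$ bounds a one-holed torus neighborhood of $\gamma_1\cup\gamma_2$, hence is simple, maps to $0$ in $Q$, and so is a degree-one elevation lying in $\Hom(X)$. But once-intersecting pairs cannot realize every wedge: again for the mod-$2$ homology cover, the algebraic intersection number $\hat i(x,y) \bmod 2$ depends only on $(\phi(x),\phi(y))$, so a wedge such as $\phi[a_1]\wedge\phi[a_2]$, whose mod-$2$ pairing vanishes, is never of this form. This last defect can be repaired by bilinearity --- write $u\wedge v = u\wedge(v+w) + u\wedge w$ with $w$ chosen so both terms are realizable by once-intersecting pairs --- but that repair is not in your proposal, and the disjoint-curve mechanism you put in its place fails.
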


Given two finite-degree abelian covers $p: X \to S$ and $q: Y \to S$ and a curve $\gamma \subset S$, we will let $n_p(\gamma)$ (resp. $n_q(\gamma)$) denote the minimum positive integer $k$ such that $\gamma$ admits an elevation of degree $k$ along $p$ to $X$ (resp. along $q$ to $Y$). 

\vspace{2 mm}

\noindent \textbf{Theorem \ref{thm:abelian}. }\textit{Let $p: X \to S$ and $q: Y \to S$ be finite-degree abelian covers of $S$. If $n_p(\gamma) = n_q(\gamma)$ for all simple closed curves $\gamma \subset S$, then $p$ and $q$ are equivalent covers.}

\vspace{2 mm}

\begin{proof}
Fix basepoints $x_0, y_0$, and $s_0$ of $X, Y$, and $S$, respectively, such that $p(x_0) = q(y_0) = s_0$. By Proposition~\ref{thm:looijenga}, $G_p$ and $G_q$ are generating sets for $\Hom(X) = \text{H}_1(X)$ and $\Hom(Y) = \text{H}_1(Y)$, respectively, and so $p_*G_p$ and $q_*G_q$ are generating sets for $p_*\text{H}_1(X)$ and $q_*\text{H}_1(Y)$. Moreover, we have that
\begin{align*} 
p_*G_p &= \{p_*[\alpha] \in \text{H}_1(S) \mid \alpha \in \pi_1(X,x_0) \text{ and } p_*\alpha \in P\} \\
&= \{[\gamma] \in \text{H}_1(S) \mid \gamma \in P \cap p_* \pi_1(X, x_0)\} \\
&= \{[\gamma] \in \text{H}_1(S) \mid \gamma \in P \cap p_* \pi_1(Y, y_0)\} \\
&= \{q_*[\beta] \in \text{H}_1(S) \mid \beta \in \pi_1(Y, y_0) \text{ and } q_* \beta \in P\} = q_* G_q, \\
\end{align*}
and so $p_*\text{H}_1(X) = q_* \text{H}_1(Y).$ Since $p$ and $q$ are abelian covers, they must be equivalent.
\end{proof}

\bibliographystyle{plain}
\bibliography{main}

\end{document}